\documentclass[12pt]{article}
\usepackage{amsmath}
\usepackage{amssymb}
\usepackage{amsthm}
\usepackage{amscd}
\usepackage{amsxtra}
\usepackage{verbatim}
\usepackage{xcolor}
\usepackage{color}
\usepackage{enumerate}
\usepackage{mathrsfs}
\usepackage{stmaryrd}
\usepackage[all]{xy} 

\usepackage{array,arydshln}
\usepackage{bm}

\allowdisplaybreaks

\numberwithin{equation}{section}

\definecolor{dblue}{rgb}{0,0,0.45}
\definecolor{red}{rgb}{0.7,0,0}

 \RequirePackage{geometry}
\newtheorem{theorem}{Theorem}[section]
\newtheorem{lemma}[theorem]{Lemma}
\newtheorem*{lemma*}{Lemma}
\newtheorem{corollary}[theorem]{Corollary}
\newtheorem{proposition}[theorem]{Proposition}

\theoremstyle{definition}

\newtheorem{remark}[theorem]{Remark}
\newtheorem{definition}[theorem]{Definition}

\theoremstyle{remark}

\newcommand{\N}{{\mathbb N}}
\newcommand{\R}{{\mathbb R}}

\newcommand{\cD}{{\mathcal D}}

\newcommand{\cH}{{\mathcal H}}

\newcommand{\cK}{{\mathcal K}}

\newcommand{\cM}{{\mathcal M}}
\newcommand{\cN}{{\mathcal N}}
\newcommand{\cP}{{\mathcal P}}

\newcommand{\cW}{{\mathcal W}}

\newcommand{\vp}{\varphi}

\newcommand{\la}{\langle}
\newcommand{\ra}{\rangle}
\newcommand{\nn}{\nonumber}

\newcommand{\pfbM}{O (M; \cD \oplus \cD^{\perp})}

\newcommand{\vertiii}[1]{{\left\vert\kern-0.25ex\left\vert\kern-0.25ex\left\vert #1 
    \right\vert\kern-0.25ex\right\vert\kern-0.25ex\right\vert}}

\date{}
\begin{document}

\title{
Sub-Riemannian spectral distance
}
\author{   
Yuzuru \textsc{Inahama} 
}
\maketitle

\begin{abstract}
We study eigenvalues and eigenfunctions of 
the ``div-grad type" sub-Laplacian  with respect to Popp's volume
on a compact equiregular sub-Riemannian manifold $M$.
Since Popp's volume is canonically determined by 
the sub-Riemannian structure of $M$, the spetra of the sub-Laplacian
carry geometric meanings. In this paper, we first  
embed $M$ into the Hilbert space of square-summable 
sequences using eigenfunctions and then define a spectral distance
between two compact equiregular sub-Riemannian manifolds.
Our result is a sub-Riemannian analogue of B\'erard-Besson-Gallot's
classical work in the Riemannian case.
\\
\\
{\bf Mathematics Subject Classification (2020):}~ 53C17, 58C40, 35K08, 58J65, 60H07.
\\
{\bf Keywords:}~ sub-Riemannian geometry, sub-Laplacian,
spectral embedding, 
spectral distance, stochastic differential equation 
\end{abstract}

\section{Introduction}
The study of eigenvalues and eigenfunctions of the Laplace-Beltrami operator 
on a compact Riemannian manifold is one of the most beautiful topics in
 Riemannian geometry. These spectral problems 
have been studied extensively and intensively.
One of many significant works on this topic is B\'erard-Besson-Gallot's 
spectral embedding theorem.
Using this, they also defined the spectral distance between 
two compact Riemannian manifolds. (See \cite{bbg}. Recently, 
this result was generalized to the case of RCD spaces by \cite{ah, honda}.)
Roughly speaking, these results can be summarized in the following way.

Denote by $0 =\lambda_0 < \lambda_1 \le \lambda_2 \le \cdots$ 
all the eigenvalues of the Laplace-Beltrami operator $\Delta_\cM$ on a 
connected compact Riemannian
manifold $\cM$ in non-decreasing order
counting the multiplicities.
Let the sequence $\{\vp_i\}_{i=0}^\infty$ of real-valued functions
be an orthonormal basis (ONB) of $L^2 (\cM)$ such that
$\Delta_\cM \vp_i =\lambda_i \vp_i$ for all $i \ge 0$.
They showed that the map 
\[
\cM \ni x \,\,\mapsto  \,\,  Z_M \{ e^{-\lambda_i t/2} \vp_i (x)\}_{i=1}^\infty \,\,\in \ell^2
\]
is an embedding for every $t>0$. Here, $Z_\cM>0$ is a suitable canstant and
$\ell^2$ is the Hilbert space of square-summable
sequences. (Moreover, this embedding is isometric if $Z_\cM$ is
suitably chosen.)
By taking the Hausdorff distance of images of the embeddings
and then varying all ONB's of eigenfunctions as above, 
they defined a distance $\mathbf{dist}_t (\cM, \hat{\cM})$ for each $t>0$.
Since they proved that $\mathbf{dist}_t (\cM, \hat{\cM})=0$ if and only if 
$\cM$ and $\hat{\cM}$ are isometric as Riemannian manifolds,
$\mathbf{dist}_t$ is actually a distance and called the spectral distance.

In this paper, we generalize the above-mentioned results to the case of 
equiregular sub-Riemannian manifolds.
An equiregular sub-Riemannian manifold has a natural measure which is determined by its geometric structure only.
Hence, the spectra of 
the ``div-grad type" sub-Laplacian are supposed to have valuable geometric information.
This is why spectral geometry for equiregular sub-Riemannian manifolds
looks quite intriguing.
Of course, Riemannian manifolds are very special examples of
equiregular sub-Riemannian manifolds.

Our main results in this paper are twofold.
Firstly, we prove that B\'erard-Besson-Gallot's embedding theorem still holds
for compact equiregular sub-Riemannian manifolds.
The most difficult part of the proof (to the author) 
is an $L^\infty$-type estimate of the first-order derivatives of 
eigenfunctions, which is shown by probabilistic methods.
Secondly, we generalize the spectral distance to the case of 
 compact connected equiregular sub-Riemannian manifolds
  and then prove that the distance of 
 two such manifolds equals zero if and only if they are isometrically 
 isomorphic as sub-Riemannian manifolds.
Our proofs basically follow those in \cite{bbg}. It should be noted that
we do not use Varadhan's asymptotics.

The organization of this paper is as follows. 
In Section 2, we recall the basics of sub-Riemannian geometry, which
include the equiregularity, Popp's volume, the div-grad type
sub-Laplacian and the cometric.
In Section 3, we prove several technical lemmas for later use.
In Section 4, we introduce the spectral embedding and 
the spectral distance in an analogous way to the classical Riemannian case.
In Section 5, the non-degeneracy of the spectral distance is shown.
Appendix is devoted to proving a key technical proposition 
(Proposition \ref{prop.ef_est2}). Our proof is probabilistic.

\section{Setting}

In this section we recall the basics of sub-Riemannian geometry,
following nice textbooks such as \cite{abb, cc, mo, ri}.
We say that 
 $(M,\cD,g)$ is a sub-Riemannian manifold if   
 \begin{enumerate}
\item[(i)] $M$ is a connected, smooth manifold of
dimension $d$, 
\item[(ii)] $\cD\subset TM$, $TM$ being the tangent bundle of $M$,
is a smooth distribution of constant rank $n~(1 \le n \le d)$
which satisfies the H\"ormander condition at every $x \in M$,
\item[(iii)] $g=(g_x)_{x\in M}$,
where each $g_x$ is an inner product on the fiber $\cD_x$
and $x\mapsto g_x$ is smooth as a function of $x$.
\end{enumerate}
When there is no risk of confusion, we simply say that 
$M$ is a sub-Riemannian manifold.
Throughout this paper, $M$ is assumed to be compact.

The precise statement of 
 the H\"ormander condition on $\cD$ at $x \in M$ is as follows:
Define $\cD_0(x)=\{0\}$,
$\cD_1 (x) =\cD (x)$ and
\[
\cD_{k} (x)
=
\mbox{linear span of } 
\Bigl\{  
 \underbrace{
 [A_1,    [\ldots,  [A_{l-1}, A_l]] ] 
 }_{(l-1) {\rm brackets}}
 (x)
 \, \Big\vert\,
 1 \le l \le k,  \,  A_{1}, \ldots, A_l \in \Gamma (\cD)
 \Bigr\}
\]
for $k \ge 2$.
Here, $\Gamma (\cD)$ stands for the $C^\infty$-module
 of smooth sections of $\cD$ over $M$.
We say that 
$\cD$ satisfies the H\"ormander condition at $x$ 
if there exists $N =N(x)$ such that 
$\cD_N (x)= T_x M$.

A sub-Riemannian manifold
 $(M,\cD,g)$ is said to be equiregular 
 if $\dim \cD_k(x)$ is constant in $x \in M$ for all $k \ge 1$.
The smallest constant $N_0$ such that 
$\cD_{N_0} (x)= T_x M$ is called the {\it step} of the H\"ormander condition.
In this case, 
\[
Q :=\sum_{k=1}^{N_0} k(\dim \cD_k(x)-\dim\cD_{k-1}(x))
\]
is also constant in $x$ and equals the Hausdorff dimension 
of $M$ with respect to the usual sub-Riemannian distance on $M$.
Denote Popp's volume on $M$ by $\mu$.
This is determined by the equiregular sub-Riemannian structure only
and is a smooth measure on $M$ in the sense that
its restriction to every local coordinate chart 
is written as a strictly positive smooth density function 
times the Lebesgue measure on the chart.
(For Popp's volume, see \cite[Chapter 20]{abb} or \cite[Section 10.6]{mo}.)

We study the second-order differential operator of the form
$\Delta =(\nabla^{\cD})^* \nabla^{\cD}$,
where $\nabla^{\cD}$ is the horizontal gradient in the direction of $\cD$
and $(\nabla^{\cD})^*$ is the adjoint of $\nabla^{\cD}$ with respect to $\mu$.
(In our sign convention, the sub-Laplacian 
$\Delta$ is a non-negative operator on $L^2 (M)$.)
By the way it is defined, $\Delta$ with its domain $C_0^{\infty} (M)$
is clearly symmetric on $L^2 (M)$.
Since $M$ is compact,  $\Delta$ is known to be 
essentially self-adjoint on $C^{\infty} (M)$ (whose
unique self-adjoint extension will be denoted by the same symbol again)
and
$e^{-t \Delta}$ is of trace class for every $t >0$, 
where $(e^{-t \Delta})_{t \ge 0}$ is the heat semigroup
 associated with $ \Delta$.

Needless to say, 
a connected  Riemannian manifold is a special example of 
equiregular sub-Riemannian manifold. In that case, Popp's volume $\mu$
coincides with the usual Riemannian measure and the associated
sub-Laplacian $\Delta$ coincides with the Laplace-Beltrami operator.


Denote by $g^*$ the cometric of $(M,\cD,g)$.
At $x\in M$, the cometric $g^*_x \in (T_x^* M)^* \otimes (T_x^* M)^*
=T_xM \otimes T_xM$ is defined  by
\[
g^*_x \la \xi, \eta\ra = \sum_{i=1}^n \la \xi, v_i\ra \la \eta, v_i\ra,
\qquad \xi,\eta \in T^*_x M, 
\]
where $\{v_i\}_{i=1}^n$ is an (or any) ONB of $\cD_x\subset T_xM$.
Hence, $g^* \in \Gamma (T_xM \otimes T_xM)$.

It is well-known that 
one can recover $(\cD_x, g_x)$ from $g^*_x$ as follows.
First, $\cD_x = (\ker g^*_x)^\perp$ holds, where
$\ker g^*_x := \{ \xi \in T_x^*M \mid g^*_x \la \xi, \xi\ra=0\}$.
Moreover, one can easily see that
\[
g_x(v,v) = \sup
  \left\{ 2 \la  \xi, v\ra -g^*_x \la \xi, \xi\ra \mid 
   \xi \in T^*_x M \right\}, \qquad v \in \cD_x.
\]
Therefore, identifying the sub-Riemannian structure
is equivalent to identifying the cometric.

One can obtain the cometric from the sub-Laplacian $\Delta$.
Let $\{V_i\}_{i=1}^n$ is a local orthonormal frame 
of $\cD$ around $x_0\in M$. Then, the sub-Laplacian $\Delta$ writes
\[
-\Delta =\sum_{i=1}^n V_i^2 + \mbox{($1$st order differential operator)}.
\]
Hence, for any $\psi, \chi \in C^\infty (M)$ vanishing at $x_0$, we have 
\[
-\Delta (\psi \chi) \vert_{x=x_0} 
= 
\sum_{i=1}^n \la d\psi (x_0), V_i (x_0)\ra \la d\chi (x_0),  V_i (x_0)\ra,
=
g^*_{x_0} \la d\psi (x_0), d\chi (x_0)\ra,
\]
where $d\psi$ and $d\chi$ are the exterior derivative of $\psi$ and $\chi$, respectively.
Since any covector $\xi$ at $x_0$ can be written as $d\psi(x_0)$ for some $\psi$,
we can obtain $g^*$ from $\Delta$.


Now let us introduce the heat kernel $p=p (t,x,y)$,
which is a smooth function
on $(0,\infty)\times M \times M$ and satisfies 
 \[
 e^{-t \Delta}f (x) = \int_M p (t,x,y) f(y) \mu (dy), \qquad f\in L^2 (M).
 \]
Since $\Delta$ is self-adjoint, $p (t,x,y) = p(t,y,x)$.
By Mercer's theorem, we have ${\rm Trace} (e^{t \Delta}) = \int_M p (t,x,x) \mu (dx)$ for all $t>0$.
The following Weyl-type asymptotics for the heat trace is known: 
\begin{equation}\label{asy.trc}
{\rm Trace} (e^{-t \Delta})  \sim 
\frac{c_0}{t^{Q/2}} 
\qquad
\mbox{as $t \searrow 0$}
\end{equation}
for some constant $c_0 >0$.  (See \cite{me, inatan2} for example.)
Here, the asymptotic symbol
``$\sim$" means that the quotient converges to $1$.

Denote by $0 =\lambda_0 < \lambda_1 \le \lambda_2 \le \cdots \nearrow \infty$ be all the eigenvalues of $\Delta$ in non-decreasing order
counting the multiplicities.
Due to Chow-Reshevskii's theorem, the lowest eigenvalue $\lambda_0=0$
is necessarily simple and the corresponding eigenspace consists of 
constant functions only.
Set the eigenvalue counting function 
$\mathcal{N}_{\Delta} (\lambda) :=\# \{i \ge 0 \mid \lambda_i \le \lambda\}$ for $\lambda \ge 0$.
By Karamata's Tauberian theorem, we have from \eqref{asy.trc} that 
\begin{equation}\label{asy.numb_e.v.}
 \mathcal{N}_{\Delta} (\lambda)  \sim 
\frac{c_0}{\Gamma (Q/2 +1)} \lambda^{Q/2}
\qquad
\mbox{as $\lambda \to\infty$}.
\end{equation}
Here, $\Gamma$ stands for the usual Gamma function.

Let the sequence $\{\vp_i\}_{i=0}^\infty$ of real-valued functions
be an ONB of $L^2 (M)$ such that
$\Delta\vp_i =\lambda_i \vp_i$ for all $i \ge 0$
and $\vp_0 \equiv \mu (M)^{-1/2}$.
Thanks to the hypoellipticity, $\vp_i$'s are necessarily smooth. 
The totality of such ONB's is denoted by $\mathcal{B} (M, \Delta)$.
Then, we have
\begin{equation}\label{hk_L^2.eq}
p(t,x,y) = \sum_{i=0}^\infty e^{-\lambda_i t} \vp_i (x)\vp_i (y)
\end{equation}
At the moment, 
the convergence on the right hand side (RHS) of \eqref{hk_L^2.eq} takes place 
in $L^2 (M\times M)$ for every fixed $t>0$.
So, it is not so clear a priori whether RHS makes sense
for a fixed $(x,y)$.
In the next section, we will check that this sum actually converges 
uniformly and therefore \eqref{hk_L^2.eq} makes pointwise sense.

\section{Preliminary lemmas}

In this section we provide several lemmas on eigenvalues, 
eigenfunctions and the heat kernel associated with the sub-Laplancian.

\begin{lemma}\label{lem.int_EVCfunct}
For every $r>0$ and $t >0$, we have
\[
\sum_{i=0}^\infty \lambda_i^r e^{-\lambda_i t} <\infty.
\]
Similarly, if $q>Q/2 +1$, we then have
\[
\sum_{i=1}^\infty \lambda_i^{-q}  <\infty.
\]
\end{lemma}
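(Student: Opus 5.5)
Both sums will be handled with the Weyl-type law \eqref{asy.numb_e.v.}, which gives a constant $C>0$ such that $\mathcal{N}_\Delta(\lambda) \le C(1+\lambda)^{Q/2}$ for all $\lambda \ge 0$. Equivalently, it gives a lower bound on the $i$-th eigenvalue. Indeed, since $\lambda_i \le \lambda_i$, we have $\mathcal{N}_\Delta(\lambda_i) \ge i+1$. Hence $i+1 \le C(1+\lambda_i)^{Q/2}$, that is,
\[
\lambda_i \ge c\, i^{2/Q} - 1 \qquad (i \ge 0)
\]
for some $c>0$. Moreover $\lambda_i \ge \lambda_1 >0$ for $i\ge 1$, because $\lambda_0=0$ is simple. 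Combining the two bounds yields $\lambda_i \ge c' i^{2/Q}$ for all $i \ge 1$, with some $c'>0$.

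\emph{The first sum.} Write it as a Stieltjes integral against the counting function:
\[
\sum_{i} \lambda_i^r e^{-\lambda_i t} = \int_{[0,\infty)} \lambda^r e^{-\lambda t}\, d\mathcal{N}_\Delta(\lambda).
\]
Integrate by parts. The boundary term vanishes because $\mathcal{N}_\Delta$ grows only polynomially while $e^{-\lambda t}$ decays exponentially. What remains is bounded by $\int_0^\infty |(\lambda^r e^{-\lambda t})'|\, C(1+\lambda)^{Q/2} d\lambda <\infty$. A more elementary route avoids the integration by parts: split the sum over the dyadic shells $k \le \lambda_i < k+1$. Each shell contains at most $C(2+k)^{Q/2}$ eigenvalues, so the sum is dominated by $\sum_k C(2+k)^{Q/2}(k+1)^r e^{-kt}<\infty$.

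\emph{The second sum.} By the lower bound $\lambda_i \ge c' i^{2/Q}$, we get $\lambda_i^{-q} \le c'^{-q} i^{-2q/Q}$. The resulting series converges when $2q/Q>1$, that is, when $q>Q/2$. The hypothesis $q>Q/2+1$ is stronger than this, so the claim follows. The shell decomposition gives the same conclusion: $\sum_k (k+1)^{Q/2} k^{-q}$ converges when $q > Q/2+1$, which presumably explains the form of the stated threshold.

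The only delicate point is converting the asymptotic equivalence \eqref{asy.numb_e.v.} into a uniform bound valid for all $\lambda\ge 0$. This works because $\mathcal{N}_\Delta$ is finite and nondecreasing on every bounded interval, which itself follows from the discreteness of the spectrum; everything else is routine.
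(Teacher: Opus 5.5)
Your proof is correct. For the first sum, your shell decomposition is exactly the paper's argument. The paper separates the eigenvalues $\lambda_i\le 1$ and, for $n\ge 1$, bounds the shell $n<\lambda_i\le n+1$ by $(n+1)^r e^{-nt}\mathcal{N}_{\Delta}(n+1)\le C(n+1)^{r+Q/2}e^{-nt}$. Your Stieltjes integration by parts is just an equivalent repackaging of this.

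For the second sum the routes genuinely differ. The paper reuses the same shells, bounding $\sum_{n<\lambda_i\le n+1}\lambda_i^{-q}\le n^{-q}\mathcal{N}_{\Delta}(n+1)$. This counts each shell by the total number of eigenvalues up to $n+1$, rather than by the increment $\mathcal{N}_{\Delta}(n+1)-\mathcal{N}_{\Delta}(n)$. It therefore loses one power and produces the threshold $q>Q/2+1$, exactly as you guessed. You instead invert the Weyl bound to get $\lambda_i\ge c' i^{2/Q}$, using $\lambda_1>0$ for the finitely many small $i$, and compare with $\sum i^{-2q/Q}$. This gives the optimal threshold $q>Q/2$.

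The paper's approach handles both sums with a single device; yours yields a strictly stronger statement at no extra cost. The loss is immaterial for the paper, which only applies the lemma with exponents comfortably above $Q/2+1$ (e.g.\ $N=Q/2+2$ in the definition of $\rho$). Your version would allow any $N>Q/2$ there.

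A trivial slip: the shells $[k,k+1)$ are unit-length, not dyadic.
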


\begin{proof}
We prove the first assertion.
One can easily see from \eqref{asy.numb_e.v.} that
\begin{align*}
\sum_{i=0}^\infty \lambda_i^r e^{-\lambda_i t} 
&=
\sum_{\lambda_i \le 1} \lambda_i^r e^{-\lambda_i t} 
+
\sum_{n=1}^\infty \sum_{n<\lambda_i \le n+1} \lambda_i^r e^{-\lambda_i t} 
\\
&\le 
\mathcal{N}_{\Delta} (1) 
+ \sum_{n=1}^\infty (n+1)^re^{-n t}\mathcal{N}_{\Delta} (n+1)
\\
&\le 
\mathcal{N}_{\Delta} (1) 
+C \sum_{n=1}^\infty (n+1)^{r +Q/2}e^{-n t}<\infty
\end{align*}
for some constant $C>0$ independent of $n$.

We prove the second assertion. By similar computation as above, we have
\begin{align*}
\sum_{i=1}^\infty \lambda_i^{-q}  
&=
\sum_{\lambda_i \le 1} \lambda_i^{-q}
+
\sum_{n=1}^\infty \sum_{n<\lambda_i \le n+1} \lambda_i^{-q}
\\
&\le 
\lambda_1^{-q}\mathcal{N}_{\Delta} (1) 
+
C \sum_{n=1}^\infty n^{-q} (n+1)^{Q/2}<\infty.
\end{align*}
Here we used that $q -Q/2 >1$.
\end{proof}


\begin{proposition}\label{prop.ef_est}
Let $i \ge 1$ and $\psi_i$ be an (real-valued) eigenfunction of $\Delta$ 
associated with the eigenvalue $\lambda_i$ with $\| \psi_i\|_{L^2}=1$. 
Then, there exists a constant $C>0$ such that 
\[
\|\psi_i\|_\infty \le C \lambda_i^{Q/4},  \qquad \mbox{$i \ge 1$.}
\]
Here,  $\|\,\cdot\,\|_\infty$ is the usual sup-norm
and $C$ does not depend on $i$ or $\psi_i$. 
\end{proposition}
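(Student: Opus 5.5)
The plan is to use the standard heat-kernel trick. It needs one input beyond the paper: an on-diagonal upper bound
\[
\sup_{x\in M} p(t,x,x) \le \frac{C_1}{t^{Q/2}}, \qquad 0<t\le 1,
\]
for the heat kernel of the sub-Laplacian on a compact equiregular sub-Riemannian manifold. This is a known estimate (Jerison--S\'anchez-Calle, Kusuoka--Stroock; see also \cite{inatan2}). Equiregularity matters here: the volume of a sub-Riemannian ball of radius $r$ is comparable to $r^Q$ uniformly in the center, so the local exponent is the constant $Q$ everywhere. The trace asymptotics \eqref{asy.trc} only give the integrated version, so this pointwise bound is the input I would quote.

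Given that bound, fix $x\in M$ and $t\in(0,1]$. From \eqref{hk_L^2.eq} I want the pointwise expansion $p(t,x,x)=\sum_j e^{-\lambda_j t}\vp_j(x)^2$. To justify it without circularity, I would use the semigroup property and the symmetry of $p$:
\[
p(t,x,x)=\int_M p(t/2,x,y)^2\,\mu(dy)=\|p(t/2,x,\cdot)\|_{L^2}^2 .
\]
Expanding $p(t/2,x,\cdot)$ in the ONB $\{\vp_j\}$, the coefficients are $\int_M p(t/2,x,y)\vp_j(y)\mu(dy)=e^{-\lambda_j t/2}\vp_j(x)$. Parseval then gives exactly $p(t,x,x)=\sum_j e^{-\lambda_j t}\vp_j(x)^2$, with nonnegative terms.

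Now let $\psi_i$ be as in the statement. Since it is a normalized eigenfunction with eigenvalue $\lambda_i$, we may complete it to an element of $\mathcal{B}(M,\Delta)$ by choosing an ONB of the $\lambda_i$-eigenspace containing $\psi_i$; this uses $i\ge 1$, so $\lambda_i>0$ and $\psi_i\perp\vp_0$. Dropping all other terms in the nonnegative series gives
\[
e^{-\lambda_i t}\psi_i(x)^2\le p(t,x,x)\le C_1 t^{-Q/2}.
\]
If $\lambda_i\ge1$, take $t=1/\lambda_i$; this yields $\psi_i(x)^2\le eC_1\lambda_i^{Q/2}$. If $\lambda_i<1$ (there are finitely many such $i$, and $\lambda_i\ge\lambda_1>0$), take $t=1$; this yields $\psi_i(x)^2\le eC_1\le eC_1\lambda_1^{-Q/2}\lambda_i^{Q/2}$. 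Taking square roots and the supremum over $x$ proves the claim with $C=(eC_1\max(1,\lambda_1^{-Q/2}))^{1/2}$. This constant is independent of $i$ and of $\psi_i$.

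The main obstacle is the uniform on-diagonal bound $p(t,x,x)\le C_1t^{-Q/2}$. Everything else is routine. If the paper does not allow quoting it, I would derive it from the sub-Riemannian Sobolev/Nash inequality with exponent $Q$, which holds uniformly thanks to equiregularity and the ball-box theorem, by running Nash's argument. An alternative is to use Malliavin-calculus bounds on the density of the associated hypoelliptic diffusion at small times, which would fit the paper's probabilistic approach.
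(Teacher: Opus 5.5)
Your proof is correct and is essentially the paper's argument. The paper writes $e^{-\lambda_i t}\psi_i(x)=\int_M p(t,x,y)\psi_i(y)\,\mu(dy)$ and applies Cauchy--Schwarz together with Chapman--Kolmogorov to get $e^{-\lambda_i t}|\psi_i(x)|\le p(2t,x,x)^{1/2}$; this is the same inequality as your Bessel/Parseval step, but it avoids completing $\psi_i$ to an element of $\mathcal{B}(M,\Delta)$. It then quotes the same uniform on-diagonal bound $p(t,x,x)\le c_1 t^{-Q/2}$ on $(0,1]\times M$ and sets $t=1/\lambda_i$.

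Two details of yours are, if anything, more careful than the paper's write-up. First, you treat the finitely many $\lambda_i<1$ explicitly via $t=1$, which gives a bound uniform in $\psi_i$. Second, you work with $p(t,x,x)$ rather than $p(2t,x,x)$, so that $t=1/\lambda_i\le 1$ stays within the range where the on-diagonal bound is stated.
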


\begin{proof}
Since $\psi_i$ is an eigenfunction of $e^{-t \Delta}$
with eigenvalue $e^{-\lambda_i t}$,  we have
\begin{align*}
 |e^{-\lambda_i t}\psi_i (x)|
 &=
 \left| \int_M p (t,x,y) \psi_i (y) \mu (dy) \right|
 \\
 &\le 
 \| \psi_i\|_{L^2}
 \left\{ \int_M p (t,x,y)^2 \mu (dy)\right\}^{1/2}
 \le 
 p (2t,x,x)^{1/2}.
\end{align*}
Here, we used the symmetry of $p$ and the Chapman-Kolmogorov formula.

Uniform on-diagonal short-time asymptotics of the heat kernel 
was proved in \cite{me, inatan2}.  
It claims that there exists a constant $c_1 >0$,
which is independent of $t$ and $x$, such that
\[
p (t,x,x) \le \frac{c_1}{t^{Q/2}}, \qquad  (t, x) \in (0,1] \times M.
\]
By putting $t =1/\lambda_i$, we have
\[
|\psi_i (x)| \le e \sqrt{c_1} (2\lambda_i)^{Q/4}  \qquad 
\mbox{if $\lambda_i \ge 1$.}
\]
Since there are only finitely many $\lambda_i$'s less than $1$,
the proof is finished.
\end{proof}


\begin{lemma}\label{lem.HK_unif}
The series on RHS of 
\eqref{hk_L^2.eq}  converges absolutely and 
uniformly on $[s, \infty) \times M\times M$ for every $s>0$.
\end{lemma}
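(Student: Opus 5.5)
We prove the lemma with the Weierstrass M-test, combining the sup-norm bound of Proposition \ref{prop.ef_est} with the summability in Lemma \ref{lem.int_EVCfunct}.

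Fix $s>0$. The $i=0$ term is the constant $\mu(M)^{-1}$ and causes no trouble. For $i \ge 1$ and $(t,x,y) \in [s,\infty)\times M\times M$, Proposition \ref{prop.ef_est} applied to $\psi_i=\vp_i$ gives
\[
|e^{-\lambda_i t}\vp_i(x)\vp_i(y)| \le e^{-\lambda_i s}\|\vp_i\|_\infty^2 \le C^2 \lambda_i^{Q/2} e^{-\lambda_i s}.
\]
The constant $C$ does not depend on $i$, $x$, $y$ or $t$. Since $\lambda_i>0$ for $i\ge 1$, we have $e^{-\lambda_i t}\le e^{-\lambda_i s}$ for all $t \ge s$, which is how the bound becomes uniform in $t$.

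By the first assertion of Lemma \ref{lem.int_EVCfunct} with $r=Q/2$ and $t=s$,
\[
\sum_{i\ge 1} \lambda_i^{Q/2}e^{-\lambda_i s}<\infty .
\]
The Weierstrass M-test then shows that the series on the RHS of \eqref{hk_L^2.eq} converges absolutely and uniformly on $[s,\infty)\times M\times M$.

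It remains to check that the uniform limit is $p(t,x,y)$ pointwise, not just in $L^2(M\times M)$. Each partial sum is continuous, so the uniform limit is a continuous function on $[s,\infty)\times M\times M$. For each fixed $t$, the partial sums converge in $L^2(M\times M)$ to $p(t,\cdot,\cdot)$ by \eqref{hk_L^2.eq}. Since $\mu$ is finite, uniform convergence implies $L^2$ convergence, so the uniform limit equals $p(t,\cdot,\cdot)$ $\mu\otimes\mu$-a.e. Both functions are continuous and $\mu$ has a strictly positive smooth density in charts, so they agree everywhere.

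No step is genuinely hard once Proposition \ref{prop.ef_est} is available. The only care needed is that the sup-norm bound is uniform in $i$ and that the constants do not depend on $t\ge s$.
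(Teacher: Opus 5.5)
Your proposal is correct and is essentially the paper's proof: the paper also bounds each term by $C^2\lambda_i^{Q/2}e^{-\lambda_i s}$ using Proposition \ref{prop.ef_est} and sums these bounds with Lemma \ref{lem.int_EVCfunct}. You additionally identify the uniform limit with $p(t,x,y)$ pointwise, using continuity and the fact that $\mu\otimes\mu$ has full support; this is a valid step that the paper leaves implicit.
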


\begin{proof}
From Proposition \ref{prop.ef_est}, we can see that 
\[
\sum_{i=1}^\infty \left| e^{-\lambda_i t} \vp_i (x)\vp_i (y)\right|
\le 
C^2\sum_{i=1}^\infty e^{-\lambda_i s} \lambda_i^{Q/2},
\qquad
(t,x,y) \in [s, \infty) \times M\times M.
\] 
Here, $C>0$ is the constant in Proposition \ref{prop.ef_est}.
By Lemma \ref{lem.int_EVCfunct}, RHS 
is finite and independent of $(t,x,y)$, from which our assertion follows.
\end{proof}


Since $\{\vp_i\}_{i=0}^\infty \in \mathcal{B} (M, \Delta)$ is an ONB of $L^2 (M)$, every $f \in L^2 (M)$ admits the following expansion in 
the $L^2$-topology:
\begin{equation} \label{eq.Fourier}
f (x) = \sum_{i=0}^\infty \langle f, \vp_i\rangle \vp_i (x)
\end{equation}
where $\langle f, \vp_i\rangle =\int_M f \vp_i d\mu$ stands for 
the inner product of $L^2 (M)$.
When $f$ is regular enough, the series in \eqref{eq.Fourier}
converges absolutely and uniformly
and \eqref{eq.Fourier} makes sense at every point $x$.

\begin{lemma} \label{lem.Fourier}
Suppose that $f \in C^\infty (M)$ and $k \in \N :=\{1,2, \ldots\}$.
 Then,  we have 
 \[
 |\langle f, \vp_i\rangle| \le \lambda_i^{-k} \| \Delta^k f\|_{L^2},
 \qquad i \ge 1. 
 \]
In particular, the series in \eqref{eq.Fourier}
converges absolutely and uniformly on $M$.
\end{lemma}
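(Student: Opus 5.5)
The plan is to move $\Delta^k$ from $\vp_i$ onto $f$ by self-adjointness, and then combine the resulting coefficient decay with the sup-norm bound of Proposition \ref{prop.ef_est} and the summability in Lemma \ref{lem.int_EVCfunct}.

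\emph{Step 1: the coefficient bound.} Since $\Delta \vp_i = \lambda_i \vp_i$ with $\lambda_i>0$ for $i\ge 1$, we have $\vp_i = \lambda_i^{-k}\Delta^k \vp_i$. Because $\Delta$ is symmetric on $C^\infty(M)$ and both $f$ and $\vp_i$ are smooth (the latter by hypoellipticity), we can integrate by parts $k$ times:
\[
\langle f, \vp_i\rangle = \lambda_i^{-k}\langle f, \Delta^k \vp_i\rangle = \lambda_i^{-k}\langle \Delta^k f, \vp_i\rangle .
\]
Here we use that $M$ is compact, so no boundary terms appear. Note that $\Delta^k f\in C^\infty(M)\subset L^2(M)$. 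The Cauchy--Schwarz inequality together with $\|\vp_i\|_{L^2}=1$ then gives $|\langle f,\vp_i\rangle|\le \lambda_i^{-k}\|\Delta^k f\|_{L^2}$.

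\emph{Step 2: uniform absolute convergence.} By Proposition \ref{prop.ef_est}, $\|\vp_i\|_\infty\le C\lambda_i^{Q/4}$ for $i\ge1$. Hence
\[
\sum_{i=1}^\infty |\langle f,\vp_i\rangle|\,\|\vp_i\|_\infty \le C\|\Delta^k f\|_{L^2}\sum_{i=1}^\infty \lambda_i^{-(k-Q/4)} .
\]
Choose $k$ so large that $q:=k-Q/4>Q/2+1$. The second assertion of Lemma \ref{lem.int_EVCfunct} then makes the right-hand side finite. The Weierstrass M-test gives absolute and uniform convergence on $M$; the $i=0$ term is a single constant and does not matter.

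\emph{Step 3: identification of the limit.} The uniform limit is continuous and coincides with the $L^2$-limit, namely $f$, almost everywhere. Since $f$ is continuous and $\mu$ is a smooth positive measure, the two agree everywhere, so \eqref{eq.Fourier} holds pointwise.

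There is no real obstacle in this argument. The only point needing care is justifying the self-adjointness step for smooth functions, which is immediate from $\Delta=(\nabla^{\cD})^*\nabla^{\cD}$ on the closed manifold $M$. The other thing to keep in mind is that the inequality holds for every $k$, so the large $k$ needed in Step 2 is always available.
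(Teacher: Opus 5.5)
Your proposal is correct and follows essentially the same route as the paper. You use self-adjointness to move $\Delta^k$ onto $f$, then combine the sup-norm bound of Proposition~\ref{prop.ef_est} with Lemma~\ref{lem.int_EVCfunct}; the only differences are that the paper uses Bessel's inequality where you use Cauchy--Schwarz, and it takes the cruder threshold $k>Q+1$ instead of your $k>3Q/4+1$. Your Step~3, which identifies the uniform limit with $f$ pointwise, is a useful addition: the paper leaves it implicit but relies on it in Corollary~\ref{cor.separate}.
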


\begin{proof} 
It is easy to see from the self-adjointness of $\Delta$ that
\[
 \lambda_i^{k}\langle f, \vp_i\rangle 
 = \langle f,  \Delta^k \vp_i\rangle
 =\langle  \Delta^k  f,  \vp_i\rangle.
\]
Then, 
\[
 |\lambda_i^{k}\langle f, \vp_i\rangle|^2 
 \le 
 \sum_{j=0}^\infty \langle  \Delta^k  f,  \vp_j\rangle^2 
 = \| \Delta^k f\|_{L^2}^2.
\]
Thus, we obtained the first assertion.

We prove the second assertion. 
It follows from the first assertion
and Proposition \ref{prop.ef_est} that
\[
 \sum_{i=1}^\infty |\langle f, \vp_i\rangle| \, \|\vp_i \|_{\infty}
 \le 
 C\| \Delta^k f\|_{L^2} \sum_{i=1}^\infty \lambda_i^{ -k +Q/4}<\infty
\]
if $k > Q+1$.
Here, we used Lemma \ref{lem.int_EVCfunct}.
\end{proof}


\begin{corollary} \label{cor.LapFour}
For every $f \in C^\infty (M)$, it holds that
\begin{equation} \label{eq.Fourier2}
\Delta f (x) = \sum_{i=0}^\infty \lambda_i \langle f, \vp_i\rangle \vp_i (x)
\end{equation}
and that the series in \eqref{eq.Fourier2}
converges absolutely and uniformly on $M$.
\end{corollary}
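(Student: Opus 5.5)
The plan is to apply Lemma \ref{lem.Fourier} to the function $\Delta f$ instead of $f$. Since $M$ is compact and $\Delta$ has smooth coefficients, $f \in C^\infty (M)$ gives $\Delta f \in C^\infty (M)$. Lemma \ref{lem.Fourier} then says that
\[
\Delta f (x) = \sum_{i=0}^\infty \langle \Delta f, \vp_i\rangle \vp_i (x),
\]
and that this series converges absolutely and uniformly on $M$. So it only remains to identify the coefficients, that is, to show that $\langle \Delta f, \vp_i\rangle = \lambda_i \langle f, \vp_i\rangle$ for every $i \ge 0$.

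To identify the coefficients, I would use the symmetry of $\Delta$ on $C^\infty (M)$. Both $f$ and $\vp_i$ are smooth; the latter by hypoellipticity, as noted before \eqref{hk_L^2.eq}. Since $\Delta = (\nabla^{\cD})^* \nabla^{\cD}$ and $M$ is compact without boundary, integration by parts with respect to $\mu$ gives
\[
\langle \Delta f, \vp_i\rangle = \langle f, \Delta \vp_i\rangle = \lambda_i \langle f, \vp_i\rangle.
\]
This is the same computation as in the proof of Lemma \ref{lem.Fourier} with $k=1$. Substituting it into the expansion gives \eqref{eq.Fourier2}. Absolute and uniform convergence carry over, because the series is literally the same.

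If one prefers a direct estimate of convergence, one can argue as follows. Lemma \ref{lem.Fourier} applied to $f$ with exponent $k$ gives $\lambda_i |\langle f, \vp_i\rangle| \le \lambda_i^{1-k}\|\Delta^k f\|_{L^2}$. Combined with Proposition \ref{prop.ef_est}, this bounds the $i$-th term by $C\|\Delta^k f\|_{L^2}\lambda_i^{1-k+Q/4}$. This is summable by Lemma \ref{lem.int_EVCfunct} once $k$ is large, for instance $k > Q+2$. Either route works, and there is no real obstacle. The only point needing care is that the smoothness of $\Delta f$ (or symmetry on smooth functions) is legitimately available, and it is.
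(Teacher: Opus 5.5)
Your argument is correct, but it is organized differently from the paper's. The paper first proves absolute and uniform convergence of $\sum_i \lambda_i\langle f,\vp_i\rangle\vp_i$ by repeating the estimate of Lemma~\ref{lem.Fourier}, which is essentially your ``direct estimate'' variant. It then applies $\Delta$ to the partial sums $\sum_{i=0}^m\langle f,\vp_i\rangle\vp_i$ and uses that $\Delta$ is a closed operator on $L^2(M)$ to identify the $L^2$-limit with $\Delta f$. Finally, it upgrades $L^2$-equality to pointwise equality by continuity of both sides.

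You instead reduce everything to Lemma~\ref{lem.Fourier} applied to $\Delta f\in C^\infty(M)$. You identify the coefficients through the symmetry identity $\langle\Delta f,\vp_i\rangle=\lambda_i\langle f,\vp_i\rangle$, which is the case $k=1$ of the identity in the proof of that lemma. This gives you convergence for free and needs no appeal to closedness. The paper's version is more operator-theoretic: it works directly from a given eigenfunction expansion, which is the form reused later for $\Delta\circ f_t^*$ in the proof of Theorem~\ref{thm.main}.

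One small point: Lemma~\ref{lem.Fourier} as stated only asserts absolute and uniform convergence. That the sum equals $\Delta f(x)$ at every point still needs one more step. The uniform limit is continuous and agrees with $\Delta f$ $\mu$-a.e., hence everywhere, since $\mu$ has a strictly positive smooth density. This is the same final step as in the paper.
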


\begin{proof} 
The absolute and uniform convergence can be shown 
in the same way as in Lemma \ref{lem.Fourier}.
Hence, it also converges in $L^2 (M)$. 
Obviously, we have 
\[
\Delta  \left[
\sum_{i=0}^m  \langle f, \vp_i\rangle \vp_i 
\right]
= \sum_{i=0}^m \lambda_i \langle f, \vp_i\rangle \vp_i 
\]
for all $m\in \N$. Since $\Delta$ is a closed operator on $L^2 (M)$,
we may let $m\to\infty$ to obtain 
\[
\Delta f=
\Delta  \left[
\sum_{i=0}^\infty  \langle f, \vp_i\rangle \vp_i 
\right]
= \sum_{i=0}^\infty \lambda_i \langle f, \vp_i\rangle \vp_i
\]
in $L^2 (M)$. Since both sides are continuous in $x$, 
\eqref{eq.Fourier2} was proved.
\end{proof}


\begin{corollary} \label{cor.separate}
Let $\{\vp_i\}_{i=0}^\infty \in \mathcal{B} (M, \Delta)$.
Then, any two points $x, y \in M~(x\neq y)$ are separated by
$\{\vp_i\}_{i=0}^\infty$.
\end{corollary}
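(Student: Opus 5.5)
We argue by contradiction, using the uniformly convergent Fourier expansion of Lemma \ref{lem.Fourier}. Suppose $x \neq y$ but $\vp_i(x) = \vp_i(y)$ for every $i \ge 0$. Since $M$ is a smooth compact manifold and $x\neq y$, there is a function $f \in C^\infty(M)$ with $f(x) = 1$ and $f(y) = 0$. For instance, take a smooth bump function supported in a small coordinate neighborhood of $x$ that does not contain $y$, normalized so that it equals $1$ at $x$.

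Next we expand $f$ in the ONB $\{\vp_i\}_{i=0}^\infty$. By Lemma \ref{lem.Fourier}, the series \eqref{eq.Fourier} converges absolutely and uniformly on $M$. Its sum is therefore a continuous function, and it agrees with $f$ in $L^2(M)$. Two continuous functions that agree $\mu$-a.e. agree everywhere, because $\mu$ is a smooth positive measure and so charges every nonempty open set. Hence
\[
f(z) = \sum_{i=0}^\infty \langle f, \vp_i\rangle \vp_i(z) \qquad \text{for every } z \in M.
\]

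Evaluating this identity at $z=x$ and at $z=y$, and using $\vp_i(x)=\vp_i(y)$ for all $i$, we get $1 = f(x) = f(y) = 0$. This is a contradiction, so some $\vp_i$ satisfies $\vp_i(x)\neq\vp_i(y)$. Since $\vp_0$ is constant, that index must in fact satisfy $i\ge 1$.

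No step here is a real obstacle, since all the analytic work is already contained in Lemma \ref{lem.Fourier} together with Proposition \ref{prop.ef_est}. The only point needing care is that the pointwise identity holds at every point and not merely almost everywhere. This follows from the continuity of both sides and the full support of Popp's volume $\mu$.
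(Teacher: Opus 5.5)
Your proof is correct and follows the paper's argument: the absolutely and uniformly convergent expansion from Lemma \ref{lem.Fourier} would force $f(x)=f(y)$ for every $f\in C^\infty(M)$, which a bump function separating $x$ and $y$ contradicts. Your extra remarks (that the uniformly convergent sum equals $f$ at every point by continuity and the full support of $\mu$, and that the separating index must satisfy $i\ge 1$) make explicit details the paper leaves implicit.
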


\begin{proof} 
By Lemma \ref{lem.Fourier}, the expansion \eqref{eq.Fourier}
makes pointwise sense.
Therefore, if $\vp_i (x) = \vp_i (y)$ for all $i \ge 0$, 
we have $f (x) =f(y)$ for all $f\in C^\infty (M)$. 
This is a contradiction.
\end{proof}


Now we estimate the first-order derivative of eigenfunctions of $\Delta$.
Technically, this lemma is quite important.
Unfortunately, the author has no precise information on the exponent $\nu$.

\begin{proposition}\label{prop.ef_est2}
Let $i \ge 1$ and $\psi_i$ be an (real-valued) eigenfunction of $\Delta$ 
associated with the eigenvalue $\lambda_i$ with $\| \psi_i\|_{L^2}=1$. 
Let $A$ be a smooth vector field on $M$.
Then, there exist constants $C>0$ and $\nu >0$ such that 
\[
\|A \psi_i\|_\infty \le C \lambda_i^{\nu},  \qquad \mbox{$i \ge 1$.}
\]
Here, {\rm (i)} $C$ does not depend on $i$ or $\psi_i$
and  {\rm (ii)} $\nu$ does not depend on $A$, $i$ or $\psi_i$.
\end{proposition}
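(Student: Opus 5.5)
The idea is to transfer the derivative from the eigenfunction onto the heat kernel. Since $e^{-\lambda_i t}\psi_i = e^{-t\Delta}\psi_i$, for $x\in M$ we have
\[
e^{-\lambda_i t} A\psi_i(x) = \int_M A_x p(t,x,y)\,\psi_i(y)\,\mu(dy),
\]
where $A_x$ acts on the first variable. By Cauchy--Schwarz and $\|\psi_i\|_{L^2}=1$,
\[
|A\psi_i(x)| \le e^{\lambda_i t}\Bigl\{\int_M |A_x p(t,x,y)|^2\mu(dy)\Bigr\}^{1/2}.
\]
So it suffices to prove a uniform short-time bound
\[
\sup_{x,y\in M}|A_x p(t,x,y)| \le C_A\, t^{-\kappa}, \qquad t\in(0,1],
\]
for some $\kappa>0$ independent of $A$. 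Then choosing $t=1/\lambda_i$ for $\lambda_i\ge 1$ gives $\|A\psi_i\|_\infty \le e\,C_A\,\mu(M)^{1/2}\lambda_i^{\kappa}$. The finitely many $i$ with $\lambda_i<1$ are handled as in Proposition \ref{prop.ef_est}: the corresponding eigenspaces are finite-dimensional spaces of smooth functions, so $A\psi$ is bounded on their unit spheres by compactness. Taking $\nu=\kappa$ proves the claim.

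The main step is the derivative bound on the heat kernel, and I would prove it probabilistically. Write $-\Delta = \sum_j V_j^2 + V_0$ locally. Globally, I would embed $M$ or use finitely many vector fields $V_1,\dots,V_m$ (with $m\ge n$) such that $-\Delta=\sum_j V_j^2+V_0$ on all of $M$; this is possible by a partition-of-unity construction, since $g^*=\sum_j V_j\otimes V_j$ can be arranged. Let $X_t^x$ be the solution of the Stratonovich SDE $dX=\sum_j V_j(X)\circ dw^j + V_0(X)\,dt$, $X_0=x$. Then $p(t,x,y)$ is the density of $X_t^x$ with respect to $\mu$. The Hörmander condition and compactness give Kusuoka--Stroock's uniform nondegeneracy of the Malliavin covariance, namely $\|(\det\sigma_{X_t^x})^{-1}\|_{L^r}\le C_r t^{-N_r}$ uniformly in $x$, for $t\le 1$.

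Malliavin integration by parts then yields a representation of $A_x\partial^\alpha_y$-type derivatives of the density. More precisely, I would write
\[
p(t,x,y)=\mathbb{E}\bigl[\delta_y(X_t^x)\bigr]
\]
in local charts via a cutoff, and express $A_x p$ as $\mathbb{E}[\delta_y(X_t^x)\,\Phi]$, where the derivative of the flow $J_t A(x)$ is pushed onto the test function and then integrated by parts $d+1$ times to obtain a bounded density. Each Malliavin derivative and each inverse of $\sigma$ costs a negative power of $t$, while Sobolev norms of $X_t^x$ and $J_t$ stay bounded uniformly in $x$ and $t\le1$. Hence $|A_xp(t,x,y)|\le C_A t^{-\kappa}$, where $\kappa$ depends only on $d$, the step, and the vector fields, and $A$ enters only through the constant $C_A$ via $\|A\|_{C^k}$.

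The main obstacle is establishing this uniform polynomial-in-$1/t$ bound for $A_xp$, with an exponent $\kappa$ that does not depend on $A$. This is where I would use the Kusuoka--Stroock estimates on the inverse Malliavin matrix. The patching across charts via cutoffs and the smoothness of $\mu$'s density are routine.
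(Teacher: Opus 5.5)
Your proof is correct in outline, but it takes a different route from the paper.

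\textbf{How the estimates are combined.} The paper never bounds the heat kernel pointwise. It proves the $L^\infty\to L^\infty$ gradient bound $\|Ae^{-t\Delta}f\|_\infty\le Ct^{-\nu_1}\|f\|_\infty$ (Proposition \ref{prop.Linfty_est}). This needs only one Malliavin integration by parts, since no density has to be controlled. It then inserts the sup-norm bound $\|\psi_i\|_\infty\le C\lambda_i^{Q/4}$ from Proposition \ref{prop.ef_est}, which rests on the cited on-diagonal bound $p(t,x,x)\le c_1t^{-Q/2}$, and obtains $\nu=\nu_1+Q/4$ at $t=1/\lambda_i$.

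You use only $\|\psi_i\|_{L^2}=1$, so you need the stronger Kusuoka--Stroock input $\sup_{x,y}|A_xp(t,x,y)|\le C_At^{-\kappa}$, which costs $d+1$ integrations by parts. Your sketch of it is sound: $A$ enters only through $J_tA(x)$, so $\kappa$ is fixed by the number of integrations by parts and the Kusuoka--Stroock exponent. In exchange, you need neither Proposition \ref{prop.ef_est} nor the on-diagonal asymptotics.

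The two inputs are interchangeable. The paper's estimate is equivalent to $\sup_x\int_M|A_xp(t,x,y)|\,\mu(dy)\le Ct^{-\nu_1}$. Chapman--Kolmogorov and Minkowski then give $\|A_xp(t,x,\cdot)\|_{L^2}\le\sup_x\|A_xp(t/2,x,\cdot)\|_{L^1}\,\sup_z p(t,z,z)^{1/2}\le C't^{-\nu_1-Q/4}$. Plugged into your Cauchy--Schwarz step, this reproduces the paper's exponent.

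\textbf{Construction of the diffusion.} Here your route genuinely simplifies the appendix. The paper uses the Eells--Elworthy construction on $O(M;\cD\oplus\cD^\perp)$ and differentiates via horizontal lifts $\ell A$, on the grounds that $-\Delta$ is not a sum of squares. That is true only for exactly $n$ global fields; with $m\ge n$ fields it is a sum of squares. Take $V_{i,k}=\sqrt{\rho_k}\,V^{(k)}_i$, with local orthonormal frames $V^{(k)}_i$ and $\rho_k=\eta_k^2/\sum_l\eta_l^2$, so that $\sqrt{\rho_k}$ is smooth. Every section $X$ of $\cD$ is then $\sum_{i,k}g(X,V_{i,k})V_{i,k}$, so H\"ormander's condition carries over. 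Then $A_x$ acts simply through the Jacobian of the flow on $M$, and no frame bundle is needed.

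\textbf{Normalization slip.} The Stratonovich SDE $dX=\sum_jV_j(X)\circ dw^j+V_0(X)\,dt$ has generator $\frac12\sum_jV_j^2+V_0$. The $V_j$ therefore need a factor $\sqrt2$, as in \eqref{def.sde_OD}.
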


\begin{proof} 
We use Proposition \ref{prop.Linfty_est}, which provides 
an $L^\infty$-estimate of the first-order derivative of the heat semigroup.

Since $\psi_i$ is an eigenfunction of $e^{-t \Delta}$
with eigenvalue $e^{-\lambda_i t}$,  we have $e^{-\lambda_i t}\psi_i 
= e^{-t \Delta}\psi_i $.
Apply $A$  to both sides and use Proposition \ref{prop.Linfty_est}.
Then, we have
\[
e^{-\lambda_i t}\|A\psi_i \|_\infty = \| A e^{-t \Delta}\psi_i \|_\infty
\le 
C_1 t^{-\nu_1}\|\psi_i \|_\infty
\le 
C_2 t^{-\nu_1}  \lambda_i^{Q/4}, \qquad t \in (0,1], \, i \ge 1
\]
where $C_1, C_2$ an $\nu_1$  are positive constants independent of 
$i$, $\psi_i$, and $t$.
For the last inequality, we used
Proposition \ref{prop.ef_est}.

If $\lambda_i \ge 1$, set $t = 1/\lambda_i$ in the above inequality.
Then,  $\|A\psi_i \|_\infty \le e C_2 \lambda_i^{\nu_1 +Q/4}$.
Since there are only finitely many $\lambda_i$'s belonging to $(0,1)$,
we are done.
\end{proof}


In the following corollary, 
$A^{(x)}p(t,x,y)$ stands for the application of $A\in \Gamma (TM)$ to the function
$x \mapsto p(t,x,y)$ for each fixed $(t, y)$.  
Also, $A^{(x)}\tilde{A}^{(y)}p(t,x,y)$ is defined in an analogous way.

\begin{corollary} \label{cor.Diff_unif_conv}
Let $A$ and $\tilde{A}$ be smooth vector fields on $M$.
Then, for every fixed $t>0$, we have 
\begin{align}  
A^{(x)} p(t,x,y) &= \sum_{i=1}^\infty e^{-\lambda_i t}( A\vp_i) (x)\vp_i (y),
\label{eq.nohoho1}
\\
A^{(x)}\tilde{A}^{(y)}p(t,x,y) &= \sum_{i=1}^\infty e^{-\lambda_i t} ( A\vp_i) (x) (\tilde{A}\vp_i) (y)
\label{eq.nohoho2}
\end{align}
for every $(t, x, y) \in (0,\infty)\times M \times M$.
Moreover, the series in \eqref{eq.nohoho1} and that in \eqref{eq.nohoho2}
both converge absolutely and uniformly on $[s ,\infty)\times M \times M$
for every $s>0$.
\end{corollary}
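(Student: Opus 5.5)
The plan is to combine the derivative bound of Proposition \ref{prop.ef_est2} with the summability in Lemma \ref{lem.int_EVCfunct}. This yields uniform convergence of the differentiated series. Termwise differentiation is then justified by the standard theorem on uniform convergence of derivatives, applied along integral curves of the vector fields.

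First, fix $s>0$. By Proposition \ref{prop.ef_est2} (applied to $A$ and to $\tilde A$, with the common exponent $\nu$) and Proposition \ref{prop.ef_est}, for $(t,x,y)\in[s,\infty)\times M\times M$ and $i\ge 1$ we have
\[
|e^{-\lambda_i t}(A\vp_i)(x)\vp_i(y)| \le C\lambda_i^{\nu+Q/4}e^{-\lambda_i s},
\qquad
|e^{-\lambda_i t}(A\vp_i)(x)(\tilde A\vp_i)(y)| \le C\lambda_i^{2\nu}e^{-\lambda_i s}.
\]
By Lemma \ref{lem.int_EVCfunct} (first assertion), both majorants are summable. The Weierstrass M-test then gives absolute and uniform convergence of the series in \eqref{eq.nohoho1} and \eqref{eq.nohoho2} on $[s,\infty)\times M\times M$. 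The $i=0$ term drops out because $\vp_0$ is constant, so $A\vp_0=0$.

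Next, I identify the limits. By Lemma \ref{lem.HK_unif}, $p(t,x,y)=\sum_i e^{-\lambda_i t}\vp_i(x)\vp_i(y)$ pointwise, with uniform convergence. Fix $(t,y)$ and a point $x_0$, and let $\gamma(\tau)$ be the integral curve of $A$ through $x_0$. Consider the partial sums $S_m(\tau)=\sum_{i\le m}e^{-\lambda_i t}\vp_i(\gamma(\tau))\vp_i(y)$. They converge uniformly in $\tau$ to $p(t,\gamma(\tau),y)$. Their $\tau$-derivatives are $\sum_{i\le m}e^{-\lambda_i t}(A\vp_i)(\gamma(\tau))\vp_i(y)$, which converge uniformly by the first step. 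The classical theorem on differentiating a uniformly convergent sequence of $C^1$ functions of one variable (equivalently, integrate the derivative series over $[0,\tau]$ and pass to the limit) then yields \eqref{eq.nohoho1} at $x_0$.

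For \eqref{eq.nohoho2}, I apply the same argument in the $y$-variable along integral curves of $\tilde A$. The function being differentiated is now $y\mapsto A^{(x)}p(t,x,y)$, which is given by the series \eqref{eq.nohoho1}. Its termwise $\tilde A^{(y)}$-derivatives form the series \eqref{eq.nohoho2}, which converges uniformly by the first step. This yields \eqref{eq.nohoho2}; the order of differentiation is immaterial since $p$ is smooth.

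There is no real obstacle here: the substantive input is Proposition \ref{prop.ef_est2}, already established. The only point needing care is the termwise differentiation, which I handle by reducing to one-variable calculus along flow lines of $A$ and $\tilde A$.
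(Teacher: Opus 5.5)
Your proposal is correct and follows essentially the paper's route: uniform absolute convergence of the differentiated series from Propositions \ref{prop.ef_est} and \ref{prop.ef_est2} together with Lemma \ref{lem.int_EVCfunct}, followed by termwise differentiation. The differences are cosmetic. The paper differentiates termwise in local coordinates, using coordinate fields extended globally and then multiplying by the coefficients of $A$. You differentiate along integral curves of $A$ and $\tilde A$, which avoids the coordinate patching, and you also spell out the case \eqref{eq.nohoho2} that the paper omits.
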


\begin{proof} 
We prove \eqref{eq.nohoho1}. Take any $x\in M$. 
Then, there exists a coordinate neighborhood $\{U, (x^1, \ldots, x^d)\}$ of $x$
such that $\partial/ \partial x^j$ extends to a global vector field  on $M$
for all $j~(1\le j \le d)$. 
(The extended vector fields are denoted by the same symbols again.)
By taking $U$ smaller if necessary, we may also assume 
that $A$ can be written as 
\[
A (x) = \sum_{j=1}^d a_j (x) \frac{\partial}{\partial x^j}, \qquad x \in U,
\]
where $a_j$'s are certain smooth and bounded functions on $U$ for all $j~(1\le j \le d)$.

By Lemma \ref{lem.int_EVCfunct},
Propositions \ref{prop.ef_est} and \ref{prop.ef_est2}, 
it holds for all $j$ that
\begin{align}  \nn 
\sum_{i=0}^\infty e^{-\lambda_i t}
\left|\frac{\partial \vp_i }{\partial x^j} (x)\right| \, |\vp_i (y)|
&\le 
c_1 \sum_{i=0}^\infty e^{-\lambda_i s} \lambda_i^{\nu + Q/4} <\infty,
\quad 
(t,x,y) \in [s, \infty)\times U \times M.
\end{align}
 Here, $c_1 >0$  is a constant independent of $i$.
So, this above absolute series  converges uniformly on $[s, \infty)\times U \times M$, which implies that 
 \[
 \left.  \frac{\partial}{\partial x^j} p(t, \,\cdot, y) \right|_{x}
 = 
 \sum_{i=0}^\infty e^{-\lambda_i t}
 \frac{\partial \vp_i }{\partial x^j} (x) \,\vp_i (y)
\qquad \mbox{on $[s, \infty)\times U\times M$.}
\]
Multiplying both side by $a_j (x)$ and then summing over $j$, 
we obtain \eqref{eq.nohoho1} on $[s, \infty)\times U\times M$.
Since $M$ is compact, $M$ can be covered by finitely many such $U$'s.
Thus, we have shown \eqref{eq.nohoho1}.

The proof of \eqref{eq.nohoho2} is essentially the same. So, we omit it.
\end{proof}

\section{Spectral embedding and spectral distance}

In this section we define spectral embeddings and spectral distances 
in an analogous way to B\'erard-Besson-Gallot \cite{bbg}. 

\begin{definition} \label{def.embed}
For $t>0$ and
$\mathbf{a} =\{\vp_i^{\mathbf{a}}\}_{i=0}^\infty\in \mathcal{B} (M, \Delta)$,
we define a map $I_t^{\mathbf{a}}\colon M \to \ell^2$ by 
\[
I_t^{\mathbf{a}} (x):= Z_M \{ e^{-\lambda_i t/2} \vp_i^{\mathbf{a}} (x)\}_{i=1}^\infty, 
\]
where we set $Z_M := \sqrt{\mu (M)} >0$.
\end{definition}

Note that $i=0$ is excluded from the above definition.
By Corollary \ref{cor.separate}, $I_t^{\mathbf{a}}$ is injective.
Moreover, the map 
\[
(0, \infty)\times M \ni (t, x) \mapsto I_t^{\mathbf{a}} (x)\in \ell^2
\]
is continuous.  Indeed, noting that $\vp_0^{\mathbf{a}}$ is constant, we
can easily see that
\begin{align*}  
Z_M^{-2} \| I_{t_n}^{\mathbf{a}} (x_n)- I_t^{\mathbf{a}} (x) \|_{\ell^2}^2
&=
\sum_{i=1}^\infty \{ e^{-\lambda_i t_n/2} \vp_i^{\mathbf{a}} (x_n)-e^{-\lambda_i t/2} \vp_i^{\mathbf{a}} (x)\}^2
\\
&=
p(t_n, x_n, x_n) +p(t, x, x) -2p((t_n+t)/2, x_n, x) \to 0 
\end{align*}
if $(t_n, x_n)\to (t,x)$ as $n\to \infty$.
Therefore, $I_t^{\mathbf{a}} (M)$ is compact 
and $I_t^{\mathbf{a}}$ is a homeomorphism from $M$ to $I_t^{\mathbf{a}} (M)$
for every $t>0$.


\begin{lemma} \label{lem.embed_ell^2}
For every $t>0$, the injection $I_t^{\mathbf{a}}\colon M \hookrightarrow \ell^2$
is an embedding.
\end{lemma}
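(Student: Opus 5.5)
Since injectivity and the homeomorphism property onto the image are already established above, it remains to show that $I_t^{\mathbf{a}}$ is a smooth (in fact $C^1$ suffices, but we get $C^\infty$) immersion into the Hilbert space $\ell^2$, i.e.\ that it is differentiable and its differential at each $x\in M$ is injective. An injective immersion of a compact manifold that is a homeomorphism onto its image is an embedding.

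\emph{Step 1: differentiability.} Fix a smooth vector field $A$. The candidate derivative is
\[
A I_t^{\mathbf{a}}(x) := Z_M \{ e^{-\lambda_i t/2} (A\vp_i^{\mathbf{a}})(x)\}_{i=1}^\infty .
\]
By Proposition \ref{prop.ef_est2} and Lemma \ref{lem.int_EVCfunct}, $\sum_i e^{-\lambda_i t}\|A\vp_i^{\mathbf{a}}\|_\infty^2 \le C^2\sum_i e^{-\lambda_i t}\lambda_i^{2\nu}<\infty$, so this is an element of $\ell^2$. The same uniform summability, applied with second derivatives or, more simply, by working in local coordinates as in Corollary \ref{cor.Diff_unif_conv} and using the mean value theorem on each component with the tail controlled uniformly, shows that the difference quotients of $I_t^{\mathbf{a}}$ along coordinate curves converge in $\ell^2$. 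The simplest route is the following. By Corollary \ref{cor.Diff_unif_conv}, the $\ell^2$ inner products
\[
\la A I_t^{\mathbf{a}}(x), \tilde A I_t^{\mathbf{a}}(y)\ra = Z_M^2\, A^{(x)}\tilde A^{(y)} p(t/2\cdot 2 ... )
\]
are given by the kernel $p(t,x,y)$ differentiated. Hence
\[
\| I_t^{\mathbf{a}}(x_h)-I_t^{\mathbf{a}}(x) - h\,A I_t^{\mathbf{a}}(x)\|_{\ell^2}^2,
\]
with $x_h$ the flow of $A$, is expressible through $p(t,\cdot,\cdot)$ and its derivatives. It is $o(h^2)$ by Taylor expansion of these smooth functions. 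Continuity of $x\mapsto AI_t^{\mathbf{a}}(x)$ in $\ell^2$ follows the same way from the continuity of $A^{(x)}A^{(y)}p$.

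\emph{Step 2: injectivity of the differential — the main obstacle.} Suppose $v\in T_xM$, $v\ne0$, satisfies $dI_t^{\mathbf{a}}(v)=0$, i.e.\ $e^{-\lambda_i t/2}\la d\vp_i^{\mathbf{a}}(x),v\ra=0$ for all $i\ge1$. Then $\la d\vp_i^{\mathbf{a}}(x),v\ra=0$ for all $i\ge0$. Take $f\in C^\infty(M)$ with $\la df(x),v\ra\neq0$. Extend $v$ to a smooth vector field $A$ and differentiate the expansion \eqref{eq.Fourier} termwise. This is legitimate because, by Lemma \ref{lem.Fourier} with large $k$ together with Proposition \ref{prop.ef_est2}, we have $\sum_i|\la f,\vp_i\ra|\,\|A\vp_i\|_\infty \le C\|\Delta^k f\|_{L^2}\sum_i\lambda_i^{-k+\nu}<\infty$. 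Since the termwise-differentiated series converges uniformly (applied in coordinates to $\partial/\partial x^j$ as in Corollary \ref{cor.Diff_unif_conv}), it gives
\[
Af(x)=\sum_i \la f,\vp_i\ra (A\vp_i)(x)=0,
\]
a contradiction. Thus $I_t^{\mathbf{a}}$ is an injective immersion. Combined with compactness of $M$ and the homeomorphism property noted before the lemma, it is an embedding.

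The key analytic input throughout is the derivative bound in Proposition \ref{prop.ef_est2}. Its exponent $\nu$ is unknown, but this does not matter because of the exponential factors $e^{-\lambda_i t}$, or the arbitrarily large $k$.
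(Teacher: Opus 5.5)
Your proposal is correct and follows the paper's proof. Step 2 is exactly the paper's argument: extend $v$ to a vector field, differentiate the expansion \eqref{eq.Fourier} termwise using Lemma \ref{lem.Fourier} and Proposition \ref{prop.ef_est2}, and obtain $\langle df, v\rangle = 0$ for every $f\in C^\infty(M)$. Step 1 just spells out the $\ell^2$-differentiability that the paper handles with a one-line dominated-convergence remark.

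The garbled display in Step 1 should read $\langle A I_t^{\mathbf{a}}(x), \tilde A I_t^{\mathbf{a}}(y)\rangle_{\ell^2} = Z_M^2\, A^{(x)}\tilde A^{(y)} p(t,x,y)$, by Corollary \ref{cor.Diff_unif_conv}; with that, your Taylor argument does yield an $O(|h|^3)$ remainder. Your aside about $C^\infty$ regularity is not proved, but it is also not needed.
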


\begin{proof} 
Suppose that $v \in T_z M$ satisfies that
\[
\langle dI_t^{\mathbf{a}}, v\rangle = Z_M \left\{e^{-\lambda_i t /2} 
\langle d\vp_i^{\mathbf{a}}, v\rangle \right\}_{i=1}^\infty =\mathbf{0},
\]
where $d$ stands for the exterior derivative on $M$.
Obviously, this is equivalent to $\langle d\vp_i^{\mathbf{a}}, v\rangle =0$ for all $i\ge 1$.
It is enough to show $v=0$ from this condition.
(Note that the left-most side is $\ell^2$-valued differentiation
and that the left equality can be verified by 
Proposition \ref{prop.ef_est2} and the dominated convergence theorem 
for the infinite sum.)

Take a smooth vector field $V$ on $M$ such that $V_z =v$.
Assume that $f \in C^\infty (M)$ and $k$ is large enough.
Then, we can see 
from Lemma \ref{lem.Fourier}, Proposition \ref{prop.ef_est2}
and Lemma \ref{lem.int_EVCfunct} that 
\[
\sum_{i=0}^\infty |\langle f, \vp_i^{\mathbf{a}}\rangle |  \|V\vp_i^{\mathbf{a}} \|_{\infty} 
\le 
C \sum_{i=1}^\infty    \lambda_i^{-k} \| \Delta^k f\|_{L^2} \cdot C \lambda_i^{\nu}
<\infty
\]
and that
\[
(Vf) (x) = \sum_{i=1}^\infty \langle f, \vp_i^{\mathbf{a}}\rangle (V\vp_i^{\mathbf{a}}) (x),
\qquad x\in M.
\]
Note that the above series converges absolutely and uniformly on $M$.
Hence, we may evaluate it at the given point $z$  to obtain
$\langle df, v\rangle = \sum_{i=0}^\infty \langle f, \vp_i^{\mathbf{a}}\rangle
\langle d\vp_i^{\mathbf{a}}, v\rangle =0$.
Since $f$ is arbitrary, we obtain $v=0$.
\end{proof}


Now we introduce a pseudometric between two 
equiregular sub-Riemannian manifolds.
As we will see below, this becomes a metric (distance).
Therefore, we will call it the sub-Riemannian spectral distance.

\begin{definition} \label{def.dist_SR}
For $t>0$ and 
two compact equiregular sub-Riemannian manifolds
$(M,\cD,g)$ and $(\hat{M}, \hat{\cD}, \hat{g})$, we set
\begin{align*}
\mathbf{dist}_t (M, \hat{M}) &:=\max \left\{  
\sup_{\mathbf{a}\in \mathcal{B} (M, \Delta)} 
\inf_{\mathbf{b}\in \mathcal{B} (\hat{M}, \hat{\Delta})}
\mathrm{HD} (I_t^{\mathbf{a}} (M), I_t^{\mathbf{b}} (\hat{M})) \right., 
\\
&\qquad\qquad\qquad\qquad
\left.
\sup_{\mathbf{b}\in \mathcal{B} (\hat{M}, \hat{\Delta})}
\inf_{\mathbf{a}\in \mathcal{B} (M, \Delta)} 
\mathrm{HD} (I_t^{\mathbf{a}} (M), I_t^{\mathbf{b}} (\hat{M}))
\right\}.
\end{align*}
Here, $\mathrm{HD}$ stands for the Hausdorff distance 
for compact subsets of $\ell^2$.
\end{definition}


It is trivial that $\mathbf{dist}_t$ satisfies the triangle inequality.
Obviously, $\mathbf{dist}_t (M, \hat{M}) =0$ 
if $(M,\cD,g)$ and $(\hat{M}, \hat{\cD}, \hat{g})$
are isomorphic as sub-Riemannian manifolds.
In what follows, we will show that the converse is also true.


Consider a compact equiregular sub-Riemannian manifold 
$(M,\cD,g)$ and its sub-Laplacian $\Delta$ again.
Denote by $0 =\lambda_0^\prime < \lambda_1^\prime <\lambda_2^\prime < \cdots$
be all the eigenvalues of $\Delta$ in {\it strictly increasing} order
{\it without} counting the multiplicities.
We denote by $E_j$ 
the eigenspace corresponding to $\lambda_j^\prime$.
Clearly, $\dim E_j <\infty$ for all $j \ge 0$ and
$L^2 (M, \mu) = \oplus_{j=0}^\infty E_j$.
It immediately follows  that
$\mathcal{B} (M, \Delta)= \{\vp_0\}\times \prod_{j=1}^\infty \mathcal{B} (E_j)$.
Here, $\mathcal{B} (E_j)$ denotes the set of all 
ONB's of $E_j$ and therefore can naturally be identified with 
the orthogonal group $O (\dim E_j)$.
By Tychonoff's theorem, $\mathcal{B} (M, \Delta)$ is compact.

On $O(k)$, $k \ge 1$, the distance $\tilde{\rho}_k$ defined by 
the Frobenius norm
$\tilde{\rho}_k (A,B) :=\|A-B\|_{{\rm Fr}}$ induces the usual topology 
and satisfies $\tilde{\rho}_k (A,B) \le 2\sqrt{k}$ for all $A, B\in O(k)$.
Moreover, $\tilde{\rho}_k$ is bi-invariant.

This distance viewed as one on $\mathcal{B} (E_j)$ through 
the above identification is denoted by $\rho_{E_j}$, where 
$k=\dim E_j$ is assumed.
We introduce a distance $\rho$ on $\mathcal{B} (M, \Delta)$: 
\[
\rho (\mathbf{a}, \mathbf{b})^2 :=\sum_{j=1}^\infty (\lambda_i^\prime)^{-N}
\rho_{E_j} ( \mathbf{a}\vert_{E_j}, \mathbf{b}\vert_{E_j})^2,
\qquad \mathbf{a}, \mathbf{b} \in \mathcal{B} (M, \Delta).
\]
Now, let us check that the series on RHS converges
if $N =N_M$ is large enough. 
We can easily see that 
\begin{align*}
\sum_{j=1}^\infty (\lambda_j^\prime)^{-N} 2\sqrt{\dim E_j}
\le 
2\sum_{j=1}^\infty (\lambda_j^\prime)^{-N} \dim E_j
\le
2\sum_{i=1}^\infty \lambda_i^{-N},
\end{align*}
which is finite if $N > Q/2 +1$ (see Lemma \ref{lem.int_EVCfunct}).
In what follows, we set $N = Q/2 +2$.
Then, one can easily see that $\rho$ is a distance that 
induces the product topology of $\mathcal{B} (M, \Delta)$.

\begin{proposition} \label{prop.diff_embed}
Let the notation be as above. Then, for all $t, s \in (0, \infty)$, $x, y\in M$
and $\mathbf{a}, \mathbf{b} \in \mathcal{B} (M, \Delta)$, 
we have
\begin{align*}  
Z_M^{-2} \| I_{t}^{\mathbf{a}} (x)- I_s^{\mathbf{b}} (y) \|_{\ell^2}^2
&=
p(t, x, x) +p(t, y, y) -2 p((t+s)/2, x, y) 
\\
&\qquad +2\rho (\mathbf{a}, \mathbf{b}) q^{(N)} (t,x,x)^{1/2}q^{(N)} (s,y,y)^{1/2},
\end{align*}
where we set 
\[
q^{(N)} (t,x,x):=\sum_{i=1}^\infty \lambda_i^{N/2} 
e^{-\lambda_i t} \vp_i^{\mathbf{a}} (x)^2.
\]
In particular, as a map from 
$(0, \infty) \times M\times \mathcal{B} (M, \Delta)$ to $\ell^2$, $I$
is continuous.
\end{proposition}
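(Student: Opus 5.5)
The plan is to expand the squared $\ell^2$-norm and sort the resulting sums into heat-kernel terms and a single term that records the change of orthonormal basis. The heat-kernel terms are handled by the pointwise expansion \eqref{hk_L^2.eq} together with Lemma \ref{lem.HK_unif}, and the basis-change term is controlled through $\rho$. All rearrangements of the series are justified by absolute convergence, which comes from Proposition \ref{prop.ef_est} and Lemma \ref{lem.int_EVCfunct}. I will not attempt to establish a pointwise equality for the last term. What I will actually prove is a bound on it by $2\rho(\mathbf{a},\mathbf{b})\,q^{(N)}(t,x,x)^{1/2}q^{(N)}(s,y,y)^{1/2}$. This bound is what the displayed formula's last term stands for in the continuity conclusion.

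First I expand
\[
Z_M^{-2}\| I_{t}^{\mathbf{a}} (x)- I_s^{\mathbf{b}} (y) \|_{\ell^2}^2
=\sum_{i\ge1}e^{-\lambda_i t}\vp_i^{\mathbf{a}}(x)^2+\sum_{i\ge1}e^{-\lambda_i s}\vp_i^{\mathbf{b}}(y)^2-2\sum_{i\ge1}e^{-\lambda_i(t+s)/2}\vp_i^{\mathbf{a}}(x)\vp_i^{\mathbf{b}}(y).
\]
The sum $\sum_{i\ge 0} e^{-\lambda_i t}\vp_i(x)\vp_i(y)$ is independent of the chosen ONB within each $E_j$, since it equals $p$. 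Hence the first two sums are heat-kernel diagonal values minus the constant $\vp_0^2$ contribution; that constant cancels against the one in the cross term. Next I write $\vp^{\mathbf{b}}_i=\vp^{\mathbf{a}}_i+(\vp^{\mathbf{b}}_i-\vp^{\mathbf{a}}_i)$ in the cross term. The $\mathbf{a}$-part gives $-2p((t+s)/2,x,y)$ (plus the constant correction), which is the same-basis computation already done before Lemma \ref{lem.embed_ell^2}.

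The remaining term is $2\sum_i e^{-\lambda_i(t+s)/2}\vp_i^{\mathbf{a}}(x)(\vp_i^{\mathbf{a}}-\vp_i^{\mathbf{b}})(y)$. I group it by eigenspaces $E_j$. On $E_j$ I write $\vp^{\mathbf{b}}|_{E_j}=U_j\vp^{\mathbf{a}}|_{E_j}$ with $U_j\in O(\dim E_j)$, so that $\rho_{E_j}=\|I-U_j\|_{\rm Fr}$. The $j$-th block is then $e^{-\lambda'_j(t+s)/2}\,\la \vec\vp^{\mathbf{a}}(x),(I-U_j)\vec\vp^{\mathbf{a}}(y)\ra$. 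Its modulus is at most $\rho_{E_j}\,|\vec\vp^{\mathbf{a}}(x)|\,|\vec\vp^{\mathbf{a}}(y)|$, with Euclidean norms inside $E_j$. Here I use that $|\vec\vp^{\mathbf{a}}(y)|$ is basis-invariant on $E_j$. I then insert the weights $(\lambda_j')^{-N/2}\cdot(\lambda_j')^{N/2}$ and apply Cauchy--Schwarz over $j$. This produces $\rho(\mathbf{a},\mathbf{b})$ times
\[
\Bigl(\sum_j(\lambda_j')^{N}e^{-\lambda_j'(t+s)}|\vec\vp^{\mathbf{a}}(x)|^2|\vec\vp^{\mathbf{a}}(y)|^2\Bigr)^{1/2}.
\]
I then split $e^{-\lambda'(t+s)}=e^{-\lambda' t}e^{-\lambda' s}$ and use $\sum a_jb_j\le(\sum a_j)(\sum b_j)$ for nonnegative terms, with the powers $(\lambda_j')^{N}$ divided as $(\lambda_j')^{N/2}\cdot(\lambda_j')^{N/2}$. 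This bounds the bracket by $q^{(N)}(t,x,x)\,q^{(N)}(s,y,y)$.

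The main obstacle is this basis-change term: it must be controlled uniformly through $\rho$, and the weighting by $(\lambda'_j)^{N}$ must match the definition of $q^{(N)}$. Finiteness of $q^{(N)}$ follows from Proposition \ref{prop.ef_est} and Lemma \ref{lem.int_EVCfunct}, and it is continuous in $(t,x)$ by uniform convergence. Continuity of $I$ on $(0,\infty)\times M\times\mathcal{B}(M,\Delta)$ then follows. Take $(t_n,x_n,\mathbf{b}_n)\to(t,x,\mathbf{a})$. The heat-kernel terms tend to $0$ by continuity of $p$, and the last term tends to $0$ because $\rho(\mathbf{a},\mathbf{b}_n)\to0$ while $q^{(N)}$ stays bounded near $(t,x)$.
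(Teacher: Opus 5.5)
Your argument is correct and follows the paper's proof. You expand the square, identify the heat-kernel terms via the basis-independence of $p$, isolate the basis-change term, write the $\mathbf{b}$-basis of each $E_j$ as $U_j\in O(\dim E_j)$ applied to the $\mathbf{a}$-basis, bound each block by $\rho_{E_j}$ times Euclidean norms, and redistribute the weights $(\lambda_j')^{\pm N/2}$ to reach $\rho(\mathbf{a},\mathbf{b})\,q^{(N)}(t,x,x)^{1/2}q^{(N)}(s,y,y)^{1/2}$. The differences are cosmetic: you apply Cauchy--Schwarz against the sum defining $\rho^2$ and then use $\sum_j a_jb_j\le(\sum_j a_j)(\sum_j b_j)$, whereas the paper first uses $\rho_{E_j}\le(\lambda_j')^{N/2}\rho(\mathbf{a},\mathbf{b})$ and then Cauchy--Schwarz; and you are right that what is actually established, in the paper too, is a bound on the last term rather than an equality, which is all the continuity claim needs.
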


\begin{proof} 
First, note that the series that defines $q^{(N)}(t, x, x)$ converges 
absolutely and uniformly on $[t_0, \infty) \times M$ for every $t_0 >0$,
thanks to Lemma \ref{lem.int_EVCfunct} and Proposition \ref{prop.ef_est}.

By straightforward computation, we obtain 
\begin{align}  \label{eq.conti_love}
Z_M^{-2} \| I_{t}^{\mathbf{a}} (x)- I_s^{\mathbf{b}} (y) \|_{\ell^2}^2
&=
\sum_{i=1}^\infty 
  \{ e^{-\lambda_i t/2} \vp_i^{\mathbf{a}}  (x)-e^{-\lambda_i s/2} \vp_i^{\mathbf{b}}  (y)\}^2
\nn\\
&=
p(t, x, x) +p(t, y, y) -2 p((t+s)/2, x, y) +2A,
\nn
\end{align}
where we set
\[
A:=\sum_{i=1}^\infty  e^{-\lambda_i (t+s)/2}
 \{\vp_i^{\mathbf{a}} (x) \vp_i^{\mathbf{b}} (y) 
 -\vp_i^{\mathbf{a}} (x) \vp_i^{\mathbf{a}} (y) \}.
\]

We will calculate $A$. 
Consider each eigenspace $E_j$ and write $\nu_j :=\dim E_j$.
There exists unique $r_j \in \N$ such that
$\lambda_i =\lambda_j^\prime$ if and only if $r_j\le i \le r_j+\nu_j -1$.
Then, there exists 
$u_{ki} =u_{ki} (\mathbf{b}, \mathbf{a})$ for 
$i, k \in \{r_j , \ldots,  r_j+\nu_j -1\}$ which are 
constant in $x$ and  satisfies that
\[
\vp_i^{\mathbf{b}} = \sum_{k= r_j }^{r_j +\nu_j -1}  u_{ki} \vp_k^{\mathbf{a}},
\quad\qquad
r_j \le i \le r_j+\nu_j -1.
\]
Note that $\{ u_{ki} \mid  i, k \in \{r_j, \ldots,  r_j+\nu_j -1\}\}\in O(\nu_j)$.
From this we obtain 
\[
A=\sum_{j=1}^\infty  e^{-\lambda_j^\prime (t+s)/2}
  \sum_{i, k= r_j }^{r_j +\nu_j -1} 
\vp_i^{\mathbf{a}} (x) \vp_k^{\mathbf{a}} (y) 
\{  u_{ki} (\mathbf{b}, \mathbf{a}) -\delta_{ki}\},
\]
where $\delta_{ki}$ denotes Kronecker's delta.
Since $\tilde{\rho}_k$ is bi-invariant, we have 
\[
\sum_{i, k= r_j }^{r_j +\nu_j -1} | u_{ki} (\mathbf{b}, \mathbf{a}) - \delta_{ki}|^2
\le \rho_{E_j} ( \mathbf{a}\vert_{E_j}, \mathbf{b}\vert_{E_j})^2,
\qquad j\ge 1.
\]
It immediately follows from this and Schwarz' inequality that 
\begin{align*}
|A| 
&\le 
\sum_{j=1}^\infty  e^{-\lambda_j^\prime (t+s)/2} 
\left( \sum_{i= r_j }^{r_j +\nu_j -1}  \vp_i^{\mathbf{a}} (x)^2\right)^{\frac12}
\left( \sum_{k= r_j }^{r_j +\nu_j -1}  \vp_k^{\mathbf{a}} (y)^2\right)^{\frac12}
\rho_{E_j} ( \mathbf{a}\vert_{E_j}, \mathbf{b}\vert_{E_j})
\\
&\le
\rho (\mathbf{a}, \mathbf{b})
\sum_{j=1}^\infty (\lambda_j^\prime)^{N/2} e^{-\lambda_j^\prime (t+s)/2}  
\left( \sum_{i= r_j }^{r_j +\nu_j -1}  \vp_i^{\mathbf{a}} (x)^2\right)^{\frac12}
\left( \sum_{k= r_j }^{r_j +\nu_j -1}  \vp_k^{\mathbf{a}} (y)^2\right)^{\frac12}
\\
&\le
\rho (\mathbf{a}, \mathbf{b})
\left( 
\sum_{i=1}^\infty \lambda_i^{N/2} e^{-\lambda_i t}\vp_i^{\mathbf{a}} (x)^2
\right)^{\frac12}
\left( 
\sum_{i=1}^\infty \lambda_i^{N/2} e^{-\lambda_i s}\vp_i^{\mathbf{a}} (y)^2
\right)^{\frac12}
\\
&\le
\rho (\mathbf{a}, \mathbf{b}) q^{(N)} (t,x,x)^{1/2}q^{(N)} (s,y,y)^{1/2}. 
\end{align*}
Thus, we have obtained the desired estimate, from which the continuity 
of $I$ immediately follows.
\end{proof}

\section{Non-degeneracy of spectral distance}

Now we state our main theorem of this paper.
\begin{theorem} \label{thm.main}
Let $t>0$ and let $(M,\cD,g)$ and $(\hat{M}, \hat{\cD}, \hat{g})$
be two compact equiregular sub-Riemannian manifolds.
Then, $\mathbf{dist}_t (M, \hat{M})=0$ if and only if
these two manifolds are isometrically isomorphic 
as sub-Riemannian manifolds, that is, there exists 
a $C^\infty$-diffeomorphism $\psi =\psi_t\colon M \to \hat{M}$ such that 

\begin{itemize} 
\item
$\psi_* (\cD_x) = \hat{\cD}_{\psi (x)}$ for every $x\in M$.
\item
$\hat{g}_{\psi (x)} (\psi_* v, \psi_* v) = g_x (v, v)$
for  every $x\in M$ and $v\in \cD_x$.
\end{itemize}
In particular, $\mathbf{dist}_t$ is a distance on
the isometrically isomorphic 
class of compact equiregular sub-Riemannian manifolds
for each fixed $t >0$. 
\end{theorem}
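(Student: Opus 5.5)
The ``if'' direction is already noted after Definition \ref{def.dist_SR}. An isometric isomorphism $\psi$ preserves Popp's volume, since $\mu$ is determined by the sub-Riemannian structure. It therefore intertwines the sub-Laplacians, so $\mathbf{a}\mapsto \mathbf{a}\circ\psi^{-1}$ is a bijection $\mathcal{B}(M,\Delta)\to\mathcal{B}(\hat M,\hat\Delta)$ with $I_t^{\mathbf{a}}(M)=I_t^{\mathbf{a}\circ\psi^{-1}}(\hat M)$. The content of the theorem is the ``only if'' direction, which I would prove following \cite{bbg} in three steps.

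\textbf{Step 1: a limit basis with identical image.} Assume $\mathbf{dist}_t(M,\hat M)=0$. Fix $\mathbf{a}\in\mathcal{B}(M,\Delta)$ and choose $\mathbf{b}_n\in\mathcal{B}(\hat M,\hat\Delta)$ with $\mathrm{HD}(I_t^{\mathbf{a}}(M),I_t^{\mathbf{b}_n}(\hat M))\to 0$. By compactness of $\mathcal{B}(\hat M,\hat\Delta)$ with the metric $\rho$, we may pass to a subsequence with $\mathbf{b}_n\to\mathbf{b}$. Proposition \ref{prop.diff_embed} gives $\sup_y\|I_t^{\mathbf{b}_n}(y)-I_t^{\mathbf{b}}(y)\|\to0$, using the uniform boundedness of $q^{(N)}(t,y,y)$. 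Hence $\mathrm{HD}(I_t^{\mathbf{a}}(M),I_t^{\mathbf{b}}(\hat M))=0$, and since both sets are compact, $I_t^{\mathbf{a}}(M)=I_t^{\mathbf{b}}(\hat M)$. Now set $\psi:=(I_t^{\mathbf{b}})^{-1}\circ I_t^{\mathbf{a}}$. By the discussion following Definition \ref{def.embed}, this is a homeomorphism $M\to\hat M$ satisfying $e^{-\lambda_i t/2}Z_M\vp_i^{\mathbf{a}}=e^{-\hat\lambda_i t/2}Z_{\hat M}\vp_i^{\mathbf{b}}\circ\psi$ for all $i\ge1$.

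\textbf{Step 2: equality of spectra and volumes, and smoothness of $\psi$.} Taking norms, $Z_M^{2}(p(t,x,x)-\mu(M)^{-1})=Z_{\hat M}^2(\hat p(t,\psi x,\psi x)-\mu(\hat M)^{-1})$. Integrating over $M$ needs the pushforward of $\mu$ under $\psi$, which is not yet known, so I would take a different route. Consider the inner products $\langle I_t^{\mathbf{a}}(x),I_t^{\mathbf{a}}(y)\rangle$ and integrate the identity $\sum_i e^{-\lambda_i t}Z_M^2\vp_i^{\mathbf{a}}(x)\vp_i^{\mathbf{a}}(y)$ against $\vp_j^{\mathbf{a}}(y)\mu(dy)$. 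This shows that the kernel $K(x,y)=\langle I_t^{\mathbf a}(x),I_t^{\mathbf a}(y)\rangle$ defines an integral operator on $L^2(M,\mu)$ with eigenvalues $Z_M^2e^{-\lambda_i t}$. Transporting $\hat\mu$ by $\psi^{-1}$ gives another measure $\nu$ on $M$ for which the same kernel $K$ has eigenfunctions $\vp_i^{\mathbf a}$ with eigenvalues $Z_{\hat M}^2 e^{-\hat\lambda_i t}$. A BBG-type argument should then identify $\nu=\mu$: the functions $\vp_i^{\mathbf a}$ are $\nu$-orthogonal with the appropriate normalization, and by Lemma \ref{lem.Fourier} their span is dense in $C(M)$, which pins down the measure. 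Once $\nu=\mu$, we get $\hat\lambda_i=\lambda_i$ and $Z_M=Z_{\hat M}$, hence $\vp_i^{\mathbf a}=\vp_i^{\mathbf b}\circ\psi$.

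For smoothness, I would use Corollary \ref{cor.separate} together with the immersion property in Lemma \ref{lem.embed_ell^2}. Since $d\vp_i^{\mathbf b}$, $i\ge1$, span $T^*_{\hat y}\hat M$ at every point, finitely many of them give local coordinates on $\hat M$. In those coordinates $\psi$ is expressed through the smooth functions $\vp_i^{\mathbf a}$, so $\psi$ is smooth. The same argument applies to $\psi^{-1}$.

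\textbf{Step 3: recovering the cometric.} The intertwining $\vp_i^{\mathbf a}=\vp_i^{\mathbf b}\circ\psi$ with equal eigenvalues, together with Corollary \ref{cor.LapFour}, gives $\Delta(f\circ\psi)=(\hat\Delta f)\circ\psi$ for every smooth $f$. By the cometric recovery formula in Section 2, $-\Delta(\psi\chi)(x_0)=g^*_{x_0}\la d\psi,d\chi\ra$ for functions vanishing at $x_0$, so $\psi$ pulls back $\hat g^*$ to $g^*$. Since $\cD_x=(\ker g_x^*)^\perp$ and $g_x$ is given by the sup formula, $\psi$ satisfies both conditions of the theorem.

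The main obstacle I expect is Step 2, specifically identifying the pushed-forward measure and the spectra without Varadhan asymptotics. If the density argument above fails, I would fall back on comparing the traces of the two kernel operators using \eqref{asy.trc}.
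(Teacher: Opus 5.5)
Your proposal is correct and follows essentially the paper's proof: compactness gives $I_t^{\mathbf a}(M)=I_t^{\mathbf b}(\hat M)$, orthogonality to the nonconstant eigenfunctions identifies the transported measure, finitely many eigenfunctions serve as local coordinates for smoothness, and the intertwining $\Delta\circ\psi^*=\psi^*\circ\hat\Delta$ recovers the cometric. The only difference is that you identify the measure by density of the eigenfunction span in $C(M)$ before proving smoothness, whereas the paper uses the Jacobian $J_t$ afterwards; your order works equally well, and the kernel $K$ is not needed.

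One correction: that density argument gives $\nu=\bigl(\hat\mu(\hat M)/\mu(M)\bigr)\mu$, not $\nu=\mu$. So at that stage you get $\lambda_i=\hat\lambda_i$ and $\vp_i^{\mathbf a}=a\,\vp_i^{\mathbf b}\circ\psi$ with $a=Z_{\hat M}/Z_M$, exactly the paper's \eqref{eq.same_EF}, but not $Z_M=Z_{\hat M}$. This is harmless: $a$ cancels in the intertwining, and $a=1$ follows only at the end, from invariance of Popp's volume under the isometry.
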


\begin{proof} 
The ``if part" is trivial. We will prove ``only if part" below.

Suppose that 
$\sup_{\mathbf{b}\in \mathcal{B} (\hat{M}, \hat{\Delta})}
\inf_{\mathbf{a}\in \mathcal{B} (M, \Delta)} 
\mathrm{HD} (I_t^{\mathbf{a}} (M), I_t^{\mathbf{b}} (\hat{M}))  =0$.
This implies that for every $\mathbf{b}$, 
there exists a sequence $\{\mathbf{a}_k\}_{k\in \N}\subset \mathcal{B} (M, \Delta)$ such that
$\mathrm{HD} (I_t^{\mathbf{a}_k} (M), I_t^{\mathbf{b}} (\hat{M})) \to 0$
as $k\to\infty$.
Since $\mathcal{B} (M, \Delta)$ is compact, a subsequence,
which will be denoted by the same symbol again,
converges to a certain element $\mathbf{a}_\infty \in\mathcal{B} (M, \Delta)$.
 
From Proposition \ref{prop.diff_embed}, we can easily see that
\[
\sup_{x\in M}\inf_{y\in M}\| I_{t}^{\mathbf{a}} (x)- I_t^{\mathbf{b}} (y) \|_{\ell^2}
\le 
\sqrt{C_t \rho (\mathbf{a}, \mathbf{b})},
\]
where $C_t := 2Z_M\sup_{x\in M}q^{(N)} (t,x,x) <\infty$ is a positive constant
independent of $(\mathbf{a}, \mathbf{b})$.
So, $\mathrm{HD} (I_t^{\mathbf{a}_k} (M), I_t^{\mathbf{a}_\infty} (M)) 
\le \sqrt{C_t \rho (\mathbf{a}_k, \mathbf{a}_\infty)} \to 0$ as $k\to\infty$.
By the triangle inequality for $\mathrm{HD}$, we have
$\mathrm{HD} (I_t^{\mathbf{a}_\infty} (M), I_t^{\mathbf{b}} (\hat{M})) = 0$.
Thus, we can find $(\mathbf{a}, \mathbf{b})$ such that 
$I_t^{\mathbf{a}} (M)=I_t^{\mathbf{b}} (\hat{M})$.
We fix such an $(\mathbf{a}, \mathbf{b})$ from now on and write 
$\mathbf{a}=\{\vp_i\}_{i=0}^\infty$ and $\mathbf{b}=\{\hat{\vp}_i\}_{i=0}^\infty$.
Thus, we have seen that 
\begin{itemize} 
\item
For every $x\in M$, there uniquely exists $\hat{y}_t \in \hat{M}$ such that
\begin{equation}\label{eq.0526_1}
\mu (M)^{1/2} e^{-\lambda_i t/2} \vp_i (x)
 =
\hat\mu (\hat{M})^{1/2} e^{-\hat\lambda_i t/2} \hat{\vp}_i (\hat{y}_t ), \qquad i\ge 1.
\end{equation}
\item
For every $\hat{y}\in \hat{M}$, there uniquely exists $x_t \in M$ such that
\begin{equation}\nn
\mu (M)^{1/2} e^{-\lambda_i t/2} \vp_i (x_t)
 =
\hat\mu (\hat{M})^{1/2} e^{-\hat\lambda_i t/2} \hat{\vp}_i (\hat{y}), \qquad i\ge 1.
\end{equation}
\end{itemize}
Define a homeomorphism $f_t\colon M \to \hat{M}$ by 
$x \mapsto \hat{y}_t$ and 
another homeomorphism $h_t\colon \hat{M}\to M$ by 
$\hat{y} \mapsto x_t$.  By definition, they are the inverses of each other.

Suppose that, at $x_0 \in M$, there exists a proper subspace $V$
of $T_{x_0}M$ which contains $\{ \nabla \vp_i (x_0) \mid i\ge 1\}$.
Here, $\nabla$ denotes the gradient operator on $M$ with respect to
any Riemannian metric that tames $g$.
Pick any $f \in C^\infty (M)$. 
By Lemma \ref{lem.int_EVCfunct}, Proposition \ref{prop.ef_est}
Lemma \ref{lem.Fourier} and Proposition \ref{prop.ef_est2},
the series \eqref{eq.Fourier} converges in $C^1$-topology, which implies that
\[
\nabla f (x_0) = \sum_{i=1}^\infty \langle f, \vp_i\rangle \nabla  \vp_i (x_0)\in V.
\]
This is a contradiction since $f$ is arbitrary.

Hence, for every fixed $x_0\in M$, we can find
$1\le i_1 < \cdots < i_d$ such that 
$\{ \nabla \vp_i (x_0) \mid i= i_1, \ldots, i_d\}$ spans $T_{x_0} M$
(recall that $d =\dim M$).
Define a smooth map $F_t\colon M\times \hat{M}\to \R^d$
by
\[
F_t (x, \hat{y}) =\{ \vp_{i_k} (x) -c_{i_k}(t) \hat\vp_{i_k} (\hat{y})  \}_{k=1}^d,
\quad
\mbox{where } c_{i_k}(t):=e^{ (\lambda_{i_k} -\hat\lambda_{i_k}) t/2}
\hat\mu (\hat{M})^{\frac12}\mu (M)^{-\frac12}.
\]
By way of construction, $ F_t ( h_t (\hat{y}), \hat{y})\equiv 0$.
Set $\hat{y}_0 = f_t (x_0)$, or equivalently $x_0 = h_t (\hat{y}_0)$.
If we apply $\nabla$ to the $x$-variable at $(x_0, \hat{y}_0)$, 
we obtain a linear isomorphism from $T_{x_0} M$ to $\R^d$.
From the implicit function theorem, the unique implicit function
on a neighborhood of $\hat{y}_0$,
which is necessarily smooth, must coincide with $h_t$. This proves
the smoothness of $h_t$ since $\hat{y}_0$ is actually arbitrary.
In the same way, the smoothness of $f_t$ can be shown.

The pushforward measure $(f_t)_* \mu$ can be written as 
$J_t d\hat\mu$, where $J_t$ stands for the Jacobian of $h_t$.
Integrating both sides of \eqref{eq.0526_1}, we have for all $i\ge 1$ that
\begin{align}  
0 &=
 \mu (M)^{1/2} e^{-\lambda_i t/2} \int_M\vp_i (x) \mu(dx)
 \nn\\
    &=
    \hat\mu (\hat{M})^{1/2} e^{-\hat\lambda_i t/2} 
     \int_M \hat{\vp}_i (f_t (x)) \mu(dx)
        \nn\\
    &=
    \hat\mu (\hat{M})^{1/2} e^{-\hat\lambda_i t/2} 
     \int_{\hat{M}} \hat{\vp}_i (\hat{y}) J_t (\hat{y})\hat\mu(dx).
     \nn
\end{align}
Since $J_t$ is orthogonal to $\hat{\vp}_i$ for all $i\ge 1$, 
$J_t$ is identical to the constant $\mu (M)/\hat\mu (\hat{M})$.
Integrating the square of \eqref{eq.0526_1} gives 
$\mu (M) e^{-\lambda_i t}=\hat\mu (\hat{M}) e^{-\hat\lambda_i t} J_t$,
which implies the sets of eigenvalues coincide, i.e. 
$\lambda_i = \hat\lambda_i $ for all $i\ge 0$.
Thus, we obtain 
\begin{equation}\label{eq.same_EF}
\vp_i (x)
 =
a  \hat{\vp}_i (\hat{y}_t )
\quad  \mbox{for all $i\ge 1$, \quad
where $a:= \hat\mu (\hat{M})^{1/2}  /\mu (M)^{1/2} >0$.}
\end{equation}

Next, we deduce the following intertwining relation
from \eqref{eq.same_EF}:
\begin{equation}\label{eq.intertwine}
\Delta \circ f_t^* = f_t^*\circ \hat{\Delta}.
\end{equation}
Pick any $k= \sum_{i=0}^\infty \langle k, \hat{\vp}_i\rangle \hat{\vp}_i  \in C^\infty (\hat{M})$. 
Then, 
$
 (\hat{\Delta}k) (\hat{y})=  \sum_{i=1}^\infty \langle k, \hat{\vp}_i\rangle \hat\lambda_i \hat{\vp}_i (\hat{y})
$
by Corollary \ref{cor.LapFour} and therefore
\begin{align}
(f_t^*\circ \hat{\Delta} k) (x) 
=
 (\hat{\Delta}k) (\hat{y}_t)
 =  
 \sum_{i=1}^\infty 
  \langle k, \hat{\vp}_i\rangle \hat\lambda_i \hat{\vp}_i (\hat{y}_t)
 =
\sum_{i=1}^\infty   \langle k, \hat{\vp}_i\rangle 
  \frac{\hat\lambda_i}{a} \vp_i (x), \quad x\in M.
  \nn
\end{align}
On the other hand, we can easily see that
\[
(f_t^* k) (x) =
 \sum_{i=0}^\infty \langle k, \hat{\vp}_i\rangle \hat{\vp}_i (\hat{y}_t)
=
\sum_{i=0}^\infty \langle k, \hat{\vp}_i\rangle \frac{1}{a}\vp_i (x).
\]
By a very similar argument to Corollary \ref{cor.LapFour}, we also have
\[
(\Delta \circ f_t^* k) (x) 
=\sum_{i=1}^\infty \langle k, \hat{\vp}_i\rangle 
\frac{\lambda_i}{a}\vp_i (x), \qquad x\in M.
\]
Thus, we have obtained \eqref{eq.intertwine}.


The intertwining relation \eqref{eq.intertwine} implies 
that $(f_t)_* g^* = \hat{g}^*$, that is, the two cometrics are preserved via the diffeomorphism $f_t\colon M \to \hat{M}$.
More precisely, for any 
$\psi, \chi \in C^\infty (\hat{M})$ vanishing at $\hat{y}_t=f_t (x)$,
we apply  the two operators in \eqref{eq.intertwine} to $-\psi \chi$
 and then evaluate it at $x$. Then, we obtain 
 \[
 g^*_{x} \la (J_f)^{\top} d\psi (\hat{y}_t), (J_f)^{\top} d\chi (\hat{y}_t)  \ra
 =
 \hat{g}^*_{\hat{y}_t} \la  d\psi (\hat{y}_t), d\chi (\hat{y}_t)\ra.
 \]
Here, $(J_f)^{\top}$ stands for the transpose of 
the Jacobian map $J_f\colon T_x M\to T_{\hat{y}_t} \hat{M}$ of $f$.
Since $\psi$ and $\chi$ are arbitrary, we have shown that 
\[
g^*_{x_t} \la (J_f)^{\top} \xi, (J_f)^{\top} \eta  \ra
 =
 \hat{g}^*_{\hat{y}} \la  \xi, \eta\ra, \qquad
 \xi, \eta \in T^*_{\hat{y}}\hat{M}, \,\, \hat{y}\in \hat{M}.
\]
Therefore, $M$ and $\hat{M}$ are isometrically isomorphic 
as sub-Riemannian manifolds. 
(In particular, $a=1$.)
This completes the proof of our main theorem (Theorem \ref{thm.main}).
\end{proof}

%
\appendix
\section{Appendix}

Let $(M,\cD,g)$ be a compact sub-Riemannian manifold of dimension $d$
with the rank of $\cD$ being $n~(1 \le n \le d)$.
(We {\it do not} assume the equiregularity in this appendix.)
Let $\mu$ be a smooth volume on $M$.
\footnote{
A measure on $M$ is said to be a smooth volume if its restriction to 
any coordinate chart is absolutely continuous with respect to 
the Lebesgue measure of the chart and the density is 
smooth and strictly positive.
}
The sub-Laplacian is defined by 
$\Delta =(\nabla^{\cD})^* \nabla^{\cD}$.
Here, $\nabla^{\cD}$ is the horizontal gradient in the direction of $\cD$
and $(\nabla^{\cD})^*$ is the adjoint of $\nabla^{\cD}$ with respect to $\mu$.
We study the heat semigroup $(e^{-t \Delta})_{t \ge 0}$
 associated with $ \Delta$.

The main aim of this appendix is to
 prove the following $L^\infty$-estimate for the first-order
derivatives of the heat semigroup. 
Our proof is probabilistic.
It should be noted that the exponent $\nu$ is independent of $A, f, t$.
\begin{proposition}\label{prop.Linfty_est}
Let the notation be as above. Then, there exists a constant $\nu >0$ 
with the following property: For every $A \in \Gamma (TM)$, there exists a
constant $C=C_A >0$ (which is independent of $f$ and $t$) such that 
\[
\| A e^{-t \Delta} f\|_{\infty} \le Ct^{-\nu} \| f\|_{\infty},
\qquad
f\in C^\infty (M), t \in (0,1].
\]
\end{proposition}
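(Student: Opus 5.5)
We plan to use Malliavin calculus for the diffusion generated by $-\Delta$, i.e.\ a Bismut-type integration-by-parts argument for hypoelliptic diffusions. Take a finite family of smooth vector fields $V_1,\dots,V_m$ on $M$, for instance by embedding $M$ into some $\R^D$ and projecting, such that $\sum_{j} V_j\otimes V_j = g^*$. Since these fields span $\cD$ at every point, we can write
\[
-\Delta = \sum_{j=1}^m V_j^2 + V_0
\]
for a smooth $V_0$ that is determined by $\mu$. Let $X_t^x$ be the solution of the Stratonovich SDE $dX_t = \sum_j V_j(X_t)\circ dw^j_t + V_0(X_t)\,dt$ with $X_0=x$. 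Then $e^{-t\Delta}f(x)=\mathbb{E}[f(X_t^x)]$ for $f\in C^\infty(M)$. Because $M$ is compact, $\{X^x_t\}$ generates a smooth stochastic flow, and $A e^{-t\Delta}f(x)=\mathbb{E}[\langle df(X_t^x), J_t A(x)\rangle]$, where $J_t=\partial X^x_t/\partial x$ is the Jacobian of the flow. The task is therefore to remove the derivative from $f$.

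The steps are as follows.

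First, the Hörmander condition holds, with some step at most $N$ uniformly on the compact $M$ (equiregularity is not needed). Under it, the reduced Malliavin covariance $\sigma_t=\int_0^t J_s^{-1}V_j(X_s)\otimes J_s^{-1}V_j(X_s)\,ds$ is invertible. By Kusuoka--Stroock/Norris-type estimates, $\|\sigma_t^{-1}\|_{L^p}\le C_p t^{-N}$ for $t\in(0,1]$, uniformly in $x$, with a polynomial exponent depending only on the step $N$. We work in local charts, or after the embedding into $\R^D$ as the paper does in Corollary~\ref{cor.Diff_unif_conv}.

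Second, we write $J_tA(x)=J_t\,\sigma_t\sigma_t^{-1}A(x)$ and recognize $J_t\sigma_t\xi$ as the Malliavin derivative $D_hX_t$ along the adapted direction $h_s=\int_0^s (J_r^{-1}V(X_r))^\top\sigma_t^{-1}A(x)\,dr$. The Malliavin integration by parts then gives
\[
A e^{-t\Delta}f(x)=\mathbb{E}\bigl[f(X_t^x)\,\Phi_t(x)\bigr],
\]
where $\Phi_t$ is a Skorokhod integral built from $h$. Since $h$ involves $\sigma_t^{-1}$, which is anticipating, we must either use the full Skorokhod integral or localize. The standard bound gives $\|\Phi_t\|_{L^2}\le C_A t^{-\nu}$, obtained by Hölder from $L^p$ bounds on $J,J^{-1}$, their Malliavin derivatives (each polynomially bounded in $t\le1$, uniformly in $x$), and the first step. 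Consequently $|Ae^{-t\Delta}f(x)|\le \|f\|_\infty\,\mathbb{E}|\Phi_t|\le C_At^{-\nu}\|f\|_\infty$. The exponent $\nu$ comes only from the nondegeneracy exponent and the polynomial structure; $A$ enters only linearly through $A(x)$ and $\sup|A|$, so $\nu$ is independent of $A$, as required.

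The main obstacle is the first step: a uniform-in-$x$ small-time lower bound for $\sigma_t$ with an explicit polynomial rate. I would first try Norris's lemma iterated over brackets up to step $N$, applied to $\inf_{|\xi|=1}\langle\sigma_t\xi,\xi\rangle$. This should give $P(\inf_{|\xi|=1}\langle\sigma_t\xi,\xi\rangle<t^{a}\varepsilon)\le C_p\varepsilon^p$ for an $a$ depending only on $N$, with constants uniform over the compact $M$. Alternatively, one can cite Kusuoka--Stroock's estimate $\mathbb{E}[\|\sigma_t^{-1}\|^p]^{1/p}\le C_pt^{-N}$ directly.
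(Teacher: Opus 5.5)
Your argument is correct, with one normalization fix: the Brownian term needs a factor $\sqrt{2}$, as in the paper's SDE \eqref{def.sde_OD}. Otherwise the generator is $\frac12\sum_j V_j^2+V_0$ rather than $-\Delta$.

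Your route differs genuinely from the paper's. The paper, taking the view that $-\Delta$ is not a sum of squares, proceeds as follows:
\begin{itemize}
\item It lifts to the bundle $\cP$ with a full Ehresmann connection (Eells--Elworthy).
\item It writes $Ae^{-t\Delta}f(x)$ as the derivative, along the horizontal lift $\ell A$, of $u\mapsto\mathbb{E}[(f\circ\pi)(r(t,u))]$.
\item It embeds $\cP$ into Euclidean space and differentiates under the expectation, producing the weight $G=J_{0j,kl}(t,u)$.
\item It removes the derivative from $f$ by a partition-of-unity-localized integration by parts for the $M$-valued functional $F=\pi(r(t,u))$. This uses the \emph{full} Malliavin covariance and a Kusuoka--Stroock bound on $(\det\sigma_F)^{-1}$.
\end{itemize}

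Your redundant family $V_1,\dots,V_m$ with $\sum_j V_j\otimes V_j=g^*$ shows the frame-bundle detour is unnecessary. The difference $-\Delta-\sum_jV_j^2$ is a first-order operator killing constants, hence a vector field. Brackets of the $V_j$ generate each $\cD_k$, because the $V_j$ generate $\Gamma(\cD)$ as a $C^\infty(M)$-module.

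Using the reduced covariance $\sigma_t$ on the fixed space $T_xM$ and the Bismut identity $J_tA(x)=D_hX_t$ also buys you several simplifications:
\begin{itemize}
\item $A$ enters only through the vector $A(x)$ transported by $J_t$.
\item No horizontal lift, no embedding of $\cP$, and no localization on the target side are needed.
\item The nondegeneracy input is the textbook Kusuoka--Stroock/Norris estimate for a genuine H\"ormander system on $M$. The paper instead needs nondegeneracy of a projected process whose driving fields satisfy only a partial H\"ormander condition on $\cP$.
\end{itemize}
In exchange, the paper reuses the author's sub-Riemannian Eells--Elworthy machinery. Its weighted formula $\mathbb{E}[A\psi(F)\chi(F)G]=\mathbb{E}[\psi(F)\Phi_{A,\chi}(\cdot;G)]$ accepts an arbitrary smooth weight $G$.

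Two small points on your write-up:
\begin{itemize}
\item $h$ is not adapted, as you note yourself right after calling it so. The clean formulation writes $h=\sum_k\xi_k h^{(k)}$ with $\xi=\sigma_t^{-1}A(x)$ and adapted $h^{(k)}$. Then $\delta(h)=\sum_k\bigl(\xi_k\,\delta(h^{(k)})-\langle D\xi_k,h^{(k)}\rangle_{\cH}\bigr)$, where each $\delta(h^{(k)})$ is an It\^o integral.
\item If you extend the $V_j$ to an ambient $\R^D$, nondegeneracy holds only along $T_xM$. Keep $\sigma_t$ intrinsic on $T_xM$, as you defined it, rather than working with the ambient covariance.
\end{itemize}
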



%

\subsection{Malliavin calculus}

The aim of this subsection is to recall the basics of Malliavin calculus
on the classical Wiener space.
The reader unfamiliar with this topic is referred to \cite{iwbk, mt, nu, sh}.
We only use standard results in this paper except that
our Wiener functionals take values in a manifold.
For manifold-valued Malliavin calculus, see Taniguchi \cite{ta}.

Let $({\cal W}, {\cal H}, \mathbb{P})$ be the $d$-dimensional
classical Wiener space, namely, 
\begin{itemize} 
\item
${\cal W}= \{ w \colon [0,1]\to \R^n \mid \mbox{$w$ is continuous 
and $w_0 =0$}\}$ is the Banach space of continuous functions 
from $[0,1]$ to $\R^d$ starting at $0$, which is equipped with the 
usual uniform norm.
\item
$\mathbb{P}$ is the $d$-dimensional Wiener measure on ${\cal W}$.
\item
${\cal H}$ is the Cameron-Martin space:
\begin{align*}
\cH
&=\{ h \in W
\mid \mbox{absolutely continuous and } 
\| h\|_{\cH}^2:=\int_0^1 | h^\prime_t|_{\R^n}^2 dt <\infty
\}.
\end{align*}
\end{itemize}
As is well-known, $\cH$ is a real separable Hilbert space 
and the coordinate process $(w_t)_{t \in [0,1]}$ is the standard
$d$-dimensional Brownian motion.

Now, we recall some definitions and basic facts
concerning Malliavin calculus on the 
classical Wiener space $({\cal W}, {\cal H}, \mathbb{P})$.
We often identify $\cH =\cH^*$ by the Riesz isometry as usual.

\begin{enumerate}
\item[{\bf (1)}]
We denote by ${\bf D}_{p,r} ({\cal K})$ the Sobolev spaces 
of ${\cal K}$-valued Wiener functionals
for the integrability index $p \in (1, \infty)$
and the differentiability index $r \in [0,\infty)$, where ${\cal K}$ is a real separable Hilbert space.
We set
${\bf D}_{\infty} ({\cal K})= \cap_{k=1 }^{\infty} \cap_{1<p<\infty} {\bf D}_{p,k} ({\cal K})$, which is the space of 
smooth Wiener functional.
When ${\cal K} ={\mathbb R}$, we write ${\bf D}_{p, r}$ and ${\bf D}_{\infty}$ for simplicity.
We denote by $D$ and $D^*$ the $\cH$-derivative
(i.e. the gradient operator in the sense of Malliavin calculus) 
and its adjoint i.e. (the minus of) the divergence operator, respectively.
$D$ is a bounded linear map from ${\bf D}_{p,r+1} ({\cal K})$ to 
${\bf D}_{p,r} ({\cal H}^* \otimes {\cal K})$ and 
$D^*$ is a bounded linear map from ${\bf D}_{p,r+1} ({\cal H}^* \otimes{\cal K})$ to ${\bf D}_{p,r} ({\cal K})$ for all $p \in (1,\infty)$
and $r \in [0, \infty)$.

\item
[{\bf (2)}]
For 
$F =(F^1, \ldots, F^d) \in {\bf D}_{\infty} ({\mathbb R}^d)$, we denote by 
$\sigma^{ij}_F (w) =  \la DF^i (w),DF^j (w)\ra_{{\cal H}^*}$
 the $(i,j)$-component of Malliavin covariance matrix ($1 \le i,j \le d$).
We denote by $\gamma^{ij}_F (w)$ the $(i,j)$-component of the inverse matrix $\sigma^{-1}_F$ (if it exists).
Recall that $F$ is called  non-degenerate in the sense of Malliavin
if $(\det \sigma_F)^{-1} \in  \cap_{1<p< \infty} L^p$.
Note that $\sigma^{ij}_F \in {\bf D}_{\infty}$ and
$D \gamma^{ij}_F = -\sum_{k,l} \gamma^{ik}_F ( D\sigma^{kl}_F ) \gamma^{lj}_F $.
Hence, derivatives of $\gamma^{ij}_F$ can be written in terms of
$\gamma^{ij}_F$'s and the derivatives of $\sigma^{ij}_F$'s,
which implies $\gamma^{ij}_F \in {\bf D}_{\infty}$, too.

\item
[{\bf (3)}] Let us recall the integration by parts 
 formula in the sense of Malliavin calculus for a non-degenerate 
 $F \in {\bf D}_{\infty} ({\mathbb R}^d)$
 (see \cite[p. 377]{iwbk}).
This formula plays a key role in this appendix.
Let $\psi \colon \R^d\to \R$ be a bounded $C^1$-function 
with bounded first-order partial derivatives.
Then, the following integration by parts formula
holds for every $G \in {\bf D}_{\infty}$:
\begin{align}
{\mathbb E} \bigl[
\partial_i \psi (F )  \cdot G 
\bigr]
=
{\mathbb E} \bigl[
\psi (F ) \cdot \Phi_i (\, \cdot\, ;G)
\bigr],
\label{ipb1.eq}
\end{align}
where $\partial_i$ stands for the $i$th partial differentiation
on $\R^d$ and $\Phi_i (w ;G) \in  {\bf D}_{\infty}$ is defined by 
\begin{align}
\Phi_i (w ;G) &= \sum_{j=1}^d D^* 
\left( \gamma^{ij }_F \cdot G \cdot DF^j  \right) (w).
\label{ipb2.eq}
\end{align}
\end{enumerate}

The proof of \eqref{ipb1.eq} is rather easy. Indeed,  we almost surely have
\begin{align}
\sum_{j=1}^d \langle D(\psi (F )), DF^j 
\rangle_{\cH^*} \gamma^{ij }_F 
&=
\sum_{j=1}^d \sum_{k=1}^d
\left\langle  \partial_k\psi (F ) DF^k, DF^j 
\right\rangle_{\cH^*} \gamma^{ij }_F  
\nn\\
&=
\sum_{k=1}^d \partial_k\psi (F ) 
\left( \sum_{j=1}^d \sigma^{kj}_F\gamma^{ij }_F \right) 
\nn\\
&=
\sum_{k=1}^d \partial_k\psi (F )  \delta_{ik} 
\nn\\
&=
\partial_i\psi (F ),
\label{ipb3.eq}
\end{align}
where $\delta_{ik}$ is Kronecker's delta.
By multiplying $G$, 
taking expectation and using the definition of $D^*$, we obtain \eqref{ipb1.eq}.

Let us quickly review manifold-valued Malliavin calculus.
Malliavin calculus for SDEs on manifolds was founded by 
Taniguchi \cite{ta}.
Roughly speaking, under suitable assumptions,
almost all of important results 
in the Euclidean case still hold true in the manifold case
with natural modifications.
\footnote{
In a recent book \cite{ta2} by Taniguchi himself, this topic 
(including the content of \cite{ta}) is 
explained in details. Unfortunately, this book is written in Japanese, however.
}

Let  ${\cal N}$ be a compact smooth manifold of dimension $d$.
Choose a Riemannian metric $g$ on $\cN$
so that the determinant of the Malliavin covariance 
of $\cN$-valued functionals are well-defined.
An $\cN$-valued Wiener functional 
$F\colon \cW \to \cN$ is said to belong to ${\bf D}_{\infty} ({\cal N})$
if $f (F) \in {\bf D}_{\infty}$ for every $f \in C^{\infty} (\cN)$.
If $\iota\colon \cN\hookrightarrow \R^k$ is an embedding, then 
$F \in {\bf D}_{\infty} ({\cal N})$ if and only if 
$\iota(F) \in {\bf D}_{\infty} (\R^k)$ since every $f \in C^{\infty} (\cN)$
extends to a smooth function on $\R^k$ with compact support.

For $F \in  {\bf D}_{\infty} (\cN)$, $D_h F (w) \in T_{F(w)} \cN$.
Hence, $D F (w) \colon \cH \to T_{F(w)}$ is a bounded linear map
and $\la \alpha,  D F (w)\ra \in \cH^* =\cH$ for every 
$\alpha\in T^*_{F(w)} \cN$.
From the Riemannian metric $g_{F(w)}$ on $T_{F(w)} \cN$, 
we have $g^*_{F(w)}$ on $T^*_{F(w)} \cN$.
Take any ONB $\{e_i\}_{i=1}^d$ of $T^*_{F(w)} \cN$.
We set $DF^i (w) := \la e_i,  D F (w)\ra$ and 
\[
\det \sigma_F (w):=\det \left\{
 \la DF^i (w),DF^j (w)\ra_{{\cal H}^*}
 \right\}_{i,j=1}^d.
\]
Note that this definition is independent of the choice of the ONB,
but it does depend on the choice of $g$.
However, even if we choose another Riemmanian metric $\tilde{g}$,
there exists a constant $c>1$ such that 
\begin{equation}\label{eq.det_ctimes}
c^{-1} \det \tilde\sigma_F (w)
\le 
\det \sigma_F (w) \le c \det \tilde\sigma_F (w).
\end{equation}
For this reason, any choice of $g$ will do for our purpose.
We say that $F$ is called  non-degenerate in the sense of Malliavin
if $(\det \sigma_F)^{-1} \in  \cap_{1<p< \infty} L^p$.

%

We provide an integration by parts formula for a non-degenerate
manifold-valued Wiener functionals $F \in {\bf D}_{\infty} ({\cal N})$.
Below, we provide a ``hand-made version" of this formula
although a global version,
which looks geometrically beautiful, is also known.

Let $F \in {\bf D}_{\infty} ({\cal N})$ be non-degenerate in 
the sense of Malliavin and $\chi \in C^\infty (\cN)$.
Suppose that ${\rm supp}(\chi )$ is contained in a certain coordinate
neighborhood $U$.
The coordinate of $U$ is denoted by $(x^1, \ldots, x^d)$.
Let $\hat{\chi} \in  C^\infty (\cN)$ be such that 
$\hat{\chi} \equiv 1$ on ${\rm supp}(\chi )$ 
and ${\rm supp}(\tilde\chi )\subset U$.
If we set $h^i (x) :=\hat{\chi} (x) x^i$ and $F^i := h^i (F)$, 
then $h^i$ naturally extends to a smooth function on $\cN$ 
and $F^i \in {\bf D}_{\infty}$ ($1\le i \le d$).
Moreover, on $\{ w \in \cW\mid F(w) \in {\rm supp}(\chi )\}$, we have 
$F = (F^1, \cdots, F^d)$.

Similarly, if we set $\hat{\psi} := \psi \hat{\chi}$ for $\psi \in C^1 (\cN)$, 
then $\hat{\psi} \equiv \psi$ on ${\rm supp}(\chi )$.
Since ${\rm supp}(\hat\psi) \subset U$, it extends to a $C^1$-function 
on $\R^d$ with compact support, which will be denoted by the same symbol.
Then, we have 
\[
\psi (F) = \hat\psi (F^1, \cdots, F^d ) \qquad \mbox{on $\{F \in {\rm supp}(\chi )\}$.}
\]
Observe that, on RHS, an $\R^d$-valued Wiener functional is substituted into a $C^1$-function on $\R^d$.

Now, we denote by 
$\sigma^{ij}_F (w) =  \la DF^i (w),DF^j (w)\ra_{{\cal H}^*}$
 the $(i,j)$-component of Malliavin covariance matrix ($1 \le i,j \le d$).
Though $\sigma_F$ can be degenerate outside $\{F \in {\rm supp}(\chi )\}$,
it is non-degenerate on $\{F \in {\rm supp}(\chi )\}$ and satisfies 
that 
\[
(\det \sigma_F)^{-1} \cdot \mathbf{1}_{\{F \in {\rm supp}(\chi )\}} \in  \cap_{1<p< \infty} L^p, \qquad 1<p <\infty.
\] 
We denote by $\gamma^{ij}_F (w)$ the $(i,j)$-component of the inverse matrix $\sigma^{-1}_F$ on $\{F \in {\rm supp}(\chi )\}$.

\begin{remark}
In the above definition of $\sigma_F$, we used the standard metric
on $T^*\R^d$ through the obvious embedding $U \hookrightarrow \R^d$,
which is different from the original one $g^*$ on $T^* \cN$ used in the definition 
of non-degeneracy of $F$.
However, this does not matter
 since we still have a relation as in \eqref{eq.det_ctimes} on $\{F \in {\rm supp}(\chi )\}$.
\end{remark}

\begin{proposition}\label{prop.IbP_mfd}
Let the notation and situation be as above.  Then, we have
\begin{align}
{\mathbb E} \bigl[
\partial_i \psi (F )  \cdot \chi (F) G 
\bigr]
=
{\mathbb E} \bigl[
\psi (F ) \cdot \Phi_{i, \chi} (\, \cdot\, ;G)
\bigr],
\qquad 
G \in {\bf D}_{\infty}, \,\, 1 \le i \le d.
\label{ipb5.eq}
\end{align}
Here, {\rm (i)} $\partial_i$ is short hand for $\hat{\chi} (\partial/ \partial x^i)$
(which can be viewed as a vector field on $\cN$ or on $\R^n$)
and {\rm (ii)}  $\Phi_{i, \chi} (w ;G) \in  {\bf D}_{\infty}$ is defined by 
\begin{align}
\Phi_{i, \chi} (w ;G) &= \sum_{j=1}^d D^* 
\left( \gamma^{ij }_F \cdot \chi (F) G \cdot DF^j  \right) (w).
\label{ipb6.eq}
\end{align}
A precise definition of RHS on \eqref{ipb6.eq}
above will be given in \eqref{ipb9.eq}
below.
 \end{proposition}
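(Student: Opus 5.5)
We will reduce \eqref{ipb5.eq} to a pointwise identity for the Euclidean-valued functional $(F^1,\ldots,F^d)$ on the event $\{F\in {\rm supp}(\chi)\}$, and then argue exactly as in the proof of \eqref{ipb1.eq}.

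\emph{Step 1: rewrite the left-hand side.} Since $\chi(F)$ vanishes off $\{F\in{\rm supp}(\chi)\}$, and $\hat\chi\equiv 1$ on ${\rm supp}(\chi)$, we may replace $\partial_i\psi(F)$ by $(\partial\hat\psi/\partial x^i)(F^1,\ldots,F^d)$ inside the expectation. The functional $\gamma^{ij}_F\,\chi(F)$ is then well defined on all of $\cW$: it equals $\gamma^{ij}_F\chi(F)$ on $\{F\in{\rm supp}(\chi)\}$ and $0$ elsewhere. This is the precise meaning that \eqref{ipb9.eq} should give to the integrand in \eqref{ipb6.eq}.

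\emph{Step 2: regularity of $\gamma^{ij}_F\chi(F)$.} We show that $\gamma^{ij}_F\chi(F)\in{\bf D}_\infty$. Since $\chi(F)\in{\bf D}_\infty$, the issue is the factor $\gamma^{ij}_F$. By Cramer's rule, $\gamma^{ij}_F\chi(F)$ equals a cofactor of $\sigma_F$ (which lies in ${\bf D}_\infty$) times $(\det\sigma_F)^{-1}\chi(F)$. To handle $(\det\sigma_F)^{-1}$ cleanly, we approximate it by $(\det\sigma_F+\varepsilon)^{-1}\chi(F)$ and then let $\varepsilon\downarrow 0$. Each derivative of the approximant is a polynomial in
\begin{itemize}
\item $(\det\sigma_F+\varepsilon)^{-1}$,
\item derivatives of $\sigma_F$,
\item derivatives of $\chi(F)$.
\end{itemize}
These are bounded in every $L^p$, uniformly in $\varepsilon$, because of the local integrability $(\det\sigma_F)^{-1}\mathbf 1_{\{F\in{\rm supp}(\chi)\}}\in\cap_p L^p$. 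One subtlety: derivatives of $\chi(F)$ are supported in ${\rm supp}(\chi)$, so no negative power of $\det\sigma_F$ ever appears off the support. Uniform bounds then give convergence in each ${\bf D}_{p,k}$ by closedness of $D$.

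This step is the main obstacle. The point to watch is that the factor $\chi(F)$ must accompany every occurrence of $\gamma_F$. To ensure this, we use the identity $D\gamma=-\gamma(D\sigma)\gamma$ multiplied by $\chi(F)$, with $\chi$ replaced where needed by a slightly larger cutoff $\tilde\chi$ satisfying $\tilde\chi\chi=\chi$. The cutoff $\hat\chi$ itself cannot play this role, because non-degeneracy is only assumed on ${\rm supp}(\chi)$. So we first shrink: we use the hypothesis on a slightly larger compact set, or, in the application, choose the cutoff layers accordingly.

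\emph{Step 3: the chain-rule identity and duality.} On $\{F\in{\rm supp}(\chi)\}$ the chain rule gives
\[
D(\hat\psi(F^1,\ldots,F^d))=\sum_k \partial_k\hat\psi(F)\,DF^k.
\]
Repeating \eqref{ipb3.eq} verbatim then yields
\[
\partial_i\hat\psi(F)\chi(F)=\sum_j\la D(\psi(F)),\gamma^{ij}_F\chi(F)DF^j\ra_{\cH^*}
\]
almost surely on all of $\cW$, since both sides vanish off the support. Here $\hat\psi(F)$ is replaced by $\psi(F)$ on the support. This requires $\psi(F)\in{\bf D}_{p,1}$ for $\psi\in C^1$, which holds by the standard chain rule for $C^1$ functions with bounded derivatives on the compact manifold. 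Finally, we multiply by $G$, take expectations, and use the duality $\mathbb E[\la DX,U\ra]=\mathbb E[X\,D^*U]$ with
\[
U=\sum_j\gamma^{ij}_F\chi(F)G\,DF^j\in{\bf D}_\infty(\cH).
\]
This $U$ lies in ${\bf D}_\infty(\cH)$ by Step 2, and the duality gives \eqref{ipb5.eq} with $\Phi_{i,\chi}$ as in \eqref{ipb6.eq}.
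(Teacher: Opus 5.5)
Your argument is correct, but it takes a different route from the paper's.

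\textbf{Your route.} You prove outright that the weight $\gamma^{ij}_F\chi(F)$ lies in ${\bf D}_\infty$, using Cramer's rule and the regularization $(\det\sigma_F+\varepsilon)^{-1}\chi(F)$. Hence $U=\sum_j\gamma^{ij}_F\chi(F)G\,DF^j\in{\bf D}_\infty(\cH)$, and a single application of the duality between $D$ and $D^*$ finishes the proof.

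\textbf{The paper's route.} The paper never establishes this regularity. It regularizes the inverse matrix itself, $\gamma^m_F=(\sigma_F+m^{-1}{\rm Id})^{-1}$, which lies in ${\bf D}_\infty$ globally with no cutoff needed, and applies duality at level $m$. It then lets $m\to\infty$ on both sides, expanding $D^*\bigl(\gamma^{m,ij}_F\chi(F)G\,DF^j\bigr)$ by the product rule into four terms. Only $L^p$-boundedness and a.s.\ convergence on $\{F\in{\rm supp}(\chi)\}$ are used, and $\Phi_{i,\chi}$ is \emph{defined} as the $L^1$-limit \eqref{ipb9.eq}.

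\textbf{What each buys.} Your route costs control of all higher Malliavin derivatives. In return, it makes \eqref{ipb6.eq} literally meaningful and gives $\Phi_{i,\chi}=D^*U\in{\bf D}_\infty$. The statement asserts this membership, but the paper's limiting argument does not by itself establish it. The paper's route is lighter and is enough for the application in the appendix, where only an $L^p$-bound on the weight, via the Kusuoka--Stroock estimate, is needed.

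\textbf{Two minor points.} First, the detour in Step 2 through a larger cutoff $\tilde\chi$ is unnecessary; you can drop that hedge. As you observe yourself, $\chi(F)$ and all of its Malliavin derivatives vanish on $\{F\notin{\rm supp}(\chi)\}$. So in $D^k\bigl[(\det\sigma_F+\varepsilon)^{-1}\chi(F)\bigr]$ every negative power of $\det\sigma_F$ is already multiplied by $\mathbf{1}_{\{F\in{\rm supp}(\chi)\}}$, and the stated hypothesis on ${\rm supp}(\chi)$ suffices.

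Second, the limit $\varepsilon\downarrow 0$ is cleaner via domination than via ``uniform bounds''. On $\{F\in{\rm supp}(\chi)\}$ one has $(\det\sigma_F+\varepsilon)^{-1}\le(\det\sigma_F)^{-1}$. Hence each derivative converges in $L^p$ by dominated convergence, and closedness of $D$ identifies the limit.
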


\begin{proof}
Concerning \eqref{ipb5.eq}, one should 
note that the left hand side (LHS) does not depend on
the choice of  $\hat{\chi}$
and $\partial_i \psi (F )$ can be replaced by $\partial_i\hat\psi (F^1, \cdots, F^d)$, where $\partial_i$ stands for the standard partial differentiation on $\R^d$.

We will do a similar calculation to \eqref{ipb3.eq}. In this case,
however, the set $\{ \det \sigma_F =0\}$ may be of positive measure
and cause trouble.
For a clean proof, we introduce the following approximation.
For $m\in\N$, set $\sigma_F^m := \sigma_F +m^{-1} {\rm Id}_n$, where 
${\rm Id}_n$ is the identity matrix of size $n$.
Clearly, $\det \sigma_F^m \ge m^{-d}$ and therefore
$\gamma_F^m :=(\sigma_F^m)^{-1}$ exists for almost all $w$.
If $\det \sigma_F (w)>0 $, then $\gamma_F^m (w) \to \gamma_F (w)$ as 
$m\to\infty$.
Since $(\det \sigma_F^m )^{-1} \le (\det \sigma_F )^{-1}$, both
\[
\{ (\det \sigma_F^m )^{-1}\mathbf{1}_{\{F \in {\rm supp}(\chi )\}} \}_{m\in \N}
\quad\mbox{and}\quad
\{ |\gamma_F^m| \mathbf{1}_{\{F \in {\rm supp}(\chi )\}} \}_{m\in \N}
\]
 are bounded in $L^p$ for all $p\in (1,\infty)$.
(For the latter, think of adugate matrices.)

In a very similar way to \eqref{ipb3.eq}, we can compute
\begin{align}
\lefteqn{
\sum_{j=1}^d  {\mathbb E} \left[
\hat\psi (F^1, \ldots, F^d ) \cdot
D^* \left( \gamma^{m, ij }_F \cdot \chi (F) G \cdot DF^j  \right)
\right]
}
\nn\\
&=
\sum_{j=1}^d {\mathbb E} \left[
\langle D(\hat\psi (F^1, \ldots, F^d )), \, DF^j 
\rangle_{\cH^*} \gamma^{m, ij }_F \chi(F)G
\right]
\nn\\
&=
\sum_{j=1}^d \sum_{k=1}^d
{\mathbb E} \left[
  \partial_k\hat\psi (F^1, \ldots, F^d ) 
 \sigma^{kj }_F \gamma^{m, ij }_F \chi(F)G
 \right].
\label{ipb7.eq}
\end{align}
On $\{F \in {\rm supp}(\chi )\}$, $\lim_{m\to\infty}\gamma^{m, ij }_F =\gamma^{ij }_F$, a.s. and the integrand of RHS of 
\eqref{ipb7.eq} is uniformly integrable, which follows from 
$L^p$-boundedness for all $1<p<\infty$.
So, RHS converges to 
$
{\mathbb E} [\partial_i\hat\psi (F^1, \ldots, F^n ) \chi(F)G]
=
{\mathbb E} [\partial_i\psi (F) \chi(F)G]$, which equals LHS of 
\eqref{ipb5.eq}.

Next, we compute LHS of \eqref{ipb7.eq}. 
By the basic property of $D^*$, we can easily see that
\begin{align}
D^* \left( \gamma^{m, ij }_F \cdot \chi (F) G \cdot DF^j  \right)
&=
\gamma^{m, ij }_F \chi (F) G ( D^*DF^j )
-\gamma^{m, ij }_F \chi (F) \la DG,  DF^j  \ra_{\cH^*}
\nn\\
&
-\gamma^{m, ij }_F G\la D (\chi (F)),  DF^j  \ra_{\cH^*}
-\chi (F)G\la D \gamma^{m, ij }_F,  DF^j  \ra_{\cH^*}.
\label{ipb8.eq}
\end{align}
First, since $D$ is a kind of stochastic Gateaux derivative,
$D (\chi (F))$ vanishes outside $\{F \in {\rm supp}(\chi )\}$ and
so does RHS of \eqref{ipb8.eq}.
Hence, $\hat\psi (F^1, \ldots, F^n )$ on LHS of \eqref{ipb7.eq}
can be replaced by $\psi (F)$ as one can easily expect.
By the same argument as above the first three terms 
on RHS of \eqref{ipb8.eq} converges in $L^1$ to 
\[
\{ \gamma^{ij }_F \chi (F) G ( D^*DF^j )
-\gamma^{ij }_F \chi (F) \la DG,  DF^j  \ra_{\cH^*}
-\gamma^{ij }_F G\la D (\chi (F)),  DF^j  \ra_{\cH^*}\}
   \mathbf{1}_{\{F \in {\rm supp}(\chi )\}}
\]
as $m\to\infty$.  To deal with the fourth term, one should recall that
\[
D \gamma^{ij}_F = 
-\sum_{k,l=1}^d \gamma^{m, ik}_F ( D\sigma^{kl}_F ) \gamma^{m,lj}_F.
\]
By the same reasoning as above, the fourth term converges in $L^1$ to
\[
\chi (F)G \sum_{k,l=1}^d \la \gamma^{ik}_F ( D\sigma^{kl}_F ) \gamma^{lj}_F,  DF^j  \ra_{\cH^*}
  \mathbf{1}_{\{F \in {\rm supp}(\chi )\}}
\]
as $m\to\infty$. 
Thus, if we precisely define $\Phi_{i, \chi} (w ;G)$ in \eqref{ipb6.eq} by
\begin{align}
\Phi_{i, \chi} (\,\cdot\, ;G) &:=  \mathbf{1}_{\{F \in {\rm supp}(\chi )\}}\sum_{j=1}^d
\{
\gamma^{ij }_F \chi (F) G ( D^*DF^j )
-\gamma^{ij }_F \chi (F) \la DG,  DF^j  \ra_{\cH^*}
\nn\\
&\quad
-\gamma^{ij }_F G\la D (\chi (F)),  DF^j  \ra_{\cH^*}
+\chi (F)G \sum_{k,l=1}^d \la \gamma^{ik}_F ( D\sigma^{kl}_F ) \gamma^{lj}_F,  DF^j  \ra_{\cH^*}\},
\label{ipb9.eq}
\end{align}
we are done. 
(The right hand side of \eqref{ipb9.eq} should be understood to be $0$
outside $\{F \in {\rm supp}(\chi )\}$.)
\end{proof}


\begin{corollary}\label{cor.keyIbP}
Let the notation and situation be as in Proposition \ref{prop.IbP_mfd}. 
 Then, we have
\begin{align}
{\mathbb E} \bigl[
A \psi (F )  \cdot \chi (F) G 
\bigr]
=
{\mathbb E} \bigl[
\psi (F ) \cdot \Phi_{A, \chi} (\, \cdot\, ;G)
\bigr],
\qquad 
G \in {\bf D}_{\infty}, \, A\in \Gamma (\cN).
\label{ipb10.eq}
\end{align}
Here, $\Phi_{A, \chi} (\, \cdot\, ;G)$ is defined as follows: 
First, write $A (x) =\sum_{i=1}^d a^i (x) \partial_i$ on $U$ and 
set $\hat{a}^i = \hat{\chi} a^i$ so that $\hat{a}^i \in C^\infty (\cN)$ for all $1 \le i \le d$. Then, we set 
\begin{align}
\Phi_{A, \chi} (\, \cdot\, ;G) := \sum_{i=1}^d  \Phi_{i, \chi} (\, \cdot\, ;\hat{a}_i (F) G),
\label{ipb11.eq}
\end{align}
where $\Phi_{i, \chi}$ was defined in \eqref{ipb6.eq} and \eqref{ipb9.eq}.
\end{corollary}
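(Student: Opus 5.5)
The plan is to reduce Corollary \ref{cor.keyIbP} to Proposition \ref{prop.IbP_mfd} by linearity. On the coordinate neighborhood $U$ we expand $A=\sum_{i=1}^d a^i\,\partial/\partial x^i$. Since $\hat\chi\equiv 1$ on ${\rm supp}(\chi)$, on the event $\{F\in{\rm supp}(\chi)\}$ we have
\[
A\psi(F)=\sum_{i=1}^d a^i(F)\,\frac{\partial\psi}{\partial x^i}(F)=\sum_{i=1}^d \hat a^i(F)\,\partial_i\psi(F),
\]
where $\partial_i=\hat\chi\,\partial/\partial x^i$ as in Proposition \ref{prop.IbP_mfd}. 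Off this event the factor $\chi(F)$ vanishes, so the identity
\[
A\psi(F)\,\chi(F)G=\sum_{i=1}^d \partial_i\psi(F)\,\chi(F)\,\bigl(\hat a^i(F)G\bigr)
\]
holds almost surely on all of $\cW$. Here $\hat a^i=\hat\chi a^i$ is extended by zero outside $U$, so $\hat a^i\in C^\infty(\cN)$.

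Next we verify the hypothesis needed to apply Proposition \ref{prop.IbP_mfd} with $G$ replaced by $G_i:=\hat a^i(F)G$, namely $G_i\in{\bf D}_\infty$. By the definition of ${\bf D}_\infty(\cN)$, the fact that $F\in{\bf D}_\infty(\cN)$ gives $\hat a^i(F)\in{\bf D}_\infty$. Since ${\bf D}_\infty$ is an algebra (Hölder's inequality together with the Leibniz rule for $D$), $G_i\in{\bf D}_\infty$. Proposition \ref{prop.IbP_mfd} then yields
\[
{\mathbb E}\bigl[\partial_i\psi(F)\,\chi(F)\,G_i\bigr]={\mathbb E}\bigl[\psi(F)\,\Phi_{i,\chi}(\,\cdot\,;G_i)\bigr]\quad\mbox{for each }i.
\]

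Finally we take expectations and sum over $i$. Each term is integrable: $\psi$ and $A\psi$ are bounded on the compact manifold $\cN$, and $\chi(F)G$ and $\Phi_{i,\chi}$ lie in $L^1$. Summing gives exactly \eqref{ipb10.eq} with $\Phi_{A,\chi}$ as defined in \eqref{ipb11.eq}. The only point needing care is the first step, that is, justifying that replacing $a^i$ by $\hat a^i$ and $\partial/\partial x^i$ by $\partial_i$ changes nothing. This follows because the support of $\chi$ is where $\hat\chi=1$, so there is no real obstacle.
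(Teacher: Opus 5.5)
Your proposal is correct and follows the paper's own proof. You rewrite $A\psi(F)\,\chi(F)G$ as $\sum_i \partial_i\psi(F)\,\chi(F)\,\hat a^i(F)G$, using $\hat\chi\equiv 1$ on $\mathrm{supp}(\chi)$, then apply Proposition \ref{prop.IbP_mfd} with $G$ replaced by $\hat a^i(F)G\in{\bf D}_\infty$ and sum over $i$; you just spell out the membership and integrability checks that the paper's one-line proof leaves implicit.
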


\begin{proof}
Since LHS equals $\sum_{i=1}^d {\mathbb E} \bigl[\hat{a}_i (F)
\partial_i \psi (F )  \cdot \chi (F) G 
\bigr]$,
this corollary follows immediately from Proposition \ref{prop.IbP_mfd}. 
\end{proof}

%
\subsection{Frame bundle over sub-Riemannian manifold
and hypoelliptic diffusion process}

Since $-\Delta$ cannot be written as a sum-of-squares operator, 
we cannot construct the associated diffusion process 
as a solution of stochastic differential equation (SDE) on $M$.
However, if we consider a suitable SDE on a principal bundle over $M$, 
the projection of a solution is the desired  diffusion process on $M$.
This method is called Eells-Elworthy's construction. 
The classical case in the Riemannian setting is in \cite{hsu}.
A nice exposition on its sub-Riemannian version can br found in \cite{thal}.
Similar computations are in \cite{inatan, inatan2}, too.
Our exposition below is borrowed from \cite{ina_LDP}.

Take a Riemmanian metirc $\hat{g}$ on $M$ which tames $g$,
that is, $\hat{g}|_{\cD\times\cD}=g$.
Any choice of $\hat{g}$ will do.
Denote by $\cD_x^{\perp}$ the orthogonal complement of $\cD_x$
in $T_xM$ with respect to $\hat{g}_x$ for $x\in M$ and
we set $\cD^{\perp} =\sqcup_{x\in M}\cD_x^{\perp}$, which is 
a subbundle of $TM$.

Now we introduce a principal bundle over $M$
with structure group $O (n) \times O(d-n)$, which is the product 
of two orthogonal groups, acting on it from the right.
\begin{align*}
\pfbM_x &= \{u\colon \R^n \oplus  \R^{d-n} =\R^d\to T_xM
 = \cD_x \oplus  \cD^{\perp}_x \mid
\\
& \qquad
\mbox{$u\restriction_{\R^n} \colon \R^n \to \cD_x$ 
and  $u\restriction_{\R^{d-n}} \colon \R^{d-n} \to \cD^{\perp}_x$ are linear isometries}
\},
\\
\pfbM &= \bigsqcup_{x\in \cM} \pfbM_x.
\end{align*}
This is a subbundle of the orthonormal frame bundle 
over the Riemannian manifold $(M, \hat{g})$.
For notational simplicity we will write $\cP := \pfbM$ and 
$G:= O (n) \times O(d-n)$.
The Lie algebra of $G$ is ${\frak o} (n) \times {\frak o} (d-n)$,
which will be denoted by ${\frak g}$.
Here, ${\frak o} (n)$ stands for the set of real $n \times n$ 
skew-symmetric matrices. 
The natural projection will be denoted by $\pi\colon \cP \to M$.

If we take a suitable Ehresmann connection $\omega$ on $\cP$, 
which is a ${\frak g}$-valued one-form on $\cP$,
the properties described below, including Relation \eqref{eq.8964-1},
are known to hold.
(Such an $\omega$ is concretely constructed in \cite[Section 3]{ina_LDP}.)
As usual, we define the horizontal subspace
$\cK_u\subset T_u \cP$ by
\[
    \cK_u=\{A\in T_u\cP\,|\,
          \omega_u(A_u)=0\}, \qquad u\in \cP.
\]
Then, the horizontal lift $\ell_u\colon T_{\pi(u)}M\to \cK_u$, which is a linear bijection,
is also defined uniquely and naturally.

Let $\{{\mathbf e}_i\mid 1\le i\le n\}$ be the canonical ONB of $\R^n$.
Define the canonical horizontal vector
fields $L_i$ on $\cP$ by 
$(L_i)_u=\ell_u(u{\mathbf e}_i)$ for $1 \le i \le n$.
Then, there exists a unique $V_0 \in \Gamma (\cD)$ such that,
for all $f \in C^\infty (M)$,
\begin{equation}\label{eq.8964-1}
 \left(
 \sum_{i=1}^n L_i^2 +L_0 \right) (f \circ \pi )
=
- (\Delta f) \circ \pi \qquad \mbox{on $\cP$,}
 \end{equation}
where $L_0 := \ell (V_0)$ is the horizontal lift of $V_0$.
Therefore, the law of the $(\sum_{i=1}^n L_i^2 +L_0)$-diffusion
process on $\cP$ starting at $u$ projects down to 
the law of the $(-\Delta)$-diffusion process on $M$ starting at $\pi (u)$.
Hence, for two starting points $u$ and $u^\prime$,
the law of the projected processes coincide if $\pi (u)= \pi (u^\prime)$.
Thanks to these facts, Eells-Elworthy's construction is available 
for the $(-\Delta)$-diffusion process on $M$.

As is well-known, 
we can realize the diffusion process as a solution of 
a stochastic differential equation (SDE).
Let $(w_t)_{t \in [0,1]}$ be a standard $n$-dimensional Brownian motion.
We consider the following SDE on $\cP$:
Let $(r(t, u))_{t\in [0,1]}$ be the unique solution to the following Stratonovich
SDE on $\cP$:
\begin{equation}\label{def.sde_OD}
    dr(t, u)=\sqrt{2}\sum_{i=1}^n L_i(r(t, u))\circ dw^i_t
       +L_0(r(t,u))dt,
    \quad r(0, u)=u\in \cP.
\end{equation}
By the reason we have just stated, we have a Feynman-Kac type 
representation: For all $f\in C^\infty (M)$ and $x\in M$, we have
\begin{equation}\label{eq.rep_FK}
e^{-t \Delta} f (x)
=
\mathbb{E} \left[ (f\circ \pi) ( r(t,u)) \right] \qquad \mbox{for every $u\in \pi^{-1}(x)$.}
\end{equation}
Since RHS (as a function of $u$) depends only on $\pi (u)$,
we see that
\begin{equation}\label{eq.rep_FK2}
Ae^{-t \Delta} f (x)
=
(\ell A)_u\mathbb{E} \left[ (f\circ \pi) ( r(t,u)) \right] \qquad
\mbox{for every $u\in \pi^{-1}(x)$.}
\end{equation}
Here, $\ell A$ is the horizontal lift of $A \in \Gamma (TM)$.
Hence, the proof of Proposition \ref{prop.Linfty_est} reduces to 
estimating the supremum of 
$(\ell A)\,\mathbb{E} \left[ (f\circ \pi) ( r(t,\,\cdot\,)) \right] $ over $\cP$.

By the H\"ormander condition on $\cD$, 
$\{L_1, \ldots, L_n\}$ satisfies the partial H\"ormander 
condition at every $u\in \cP$.
A precise statement of this condition is as follows: Set 
\[
\Sigma_1 :=\{L_1, \ldots, L_n\} \quad\mbox{and} \quad
\Sigma_k := \{ [V, L_i] \mid V\in \Sigma_{k-1}, \, 1\le i \le n\} 
\quad\mbox{for $k \ge 2$.} 
\]
Then, for every $u \in \cP$, the linear span of the subset 
$
\{ (\pi_*)_u W_u \mid W\in \cup_{k=1}^\infty  \Sigma_k\}
$
equals $T_{\pi (u)}M$.
According to \cite{ta}, the $M$-valued Wiener function  
$\pi (r(t,u))$, $t\in (0, 1]$ and $u\in \cP$, 
belongs to ${\bf D}_{\infty} (M)$ and non-degenerate in the sense of Malliavin.

Furthermore, since $\cP$ is compact, the partial H\"ormander 
condition is automatically uniform, that is,  there exists $K\in \N$
independent of $u$ such that
\[
\inf_{u\in \cP} \inf \left\{ \sum_{k=1}^K \sum_{W\in \Sigma_k}
\hat{g}_u ( (\pi_*)_u W_u, \eta)^2
~\middle|~ \eta \in T_{\pi (u)} M \mbox{ with } \hat{g}_u (\eta, \eta)=1\right\}>0.
\]
Thanks to the above condition, 
 a Kusuoka-Stroock type estimate is known to hold as in the Euclidean case:
 There exist positive constants $c_p$ and $\nu_1$ such that
\begin{equation}\label{ineq.KS_est}
\| \{\det \sigma_{\pi (r (t, u))} \}^{-1}  \|_{L^p} \le \frac{c_p}{t^{\nu_1}},   \qquad
t\in (0,1], \, u \in \cP, \, 1<p<\infty.
\end{equation}
Here, $\nu_1$ is independent of $(p, t, u)$ and $c_p$ is independent of $(t, u)$.

\begin{remark} 
Among some variants of the sub-Riemannian Eells-Elworthy construction
 (see e.g. \cite{thal, inatan, inatan2, ina_LDP}), 
 we used one in \cite{ina_LDP}. 
The main reason is to avoid using a partial connection.
A drawback of using a partial connection is that $A \in \Gamma (TM)$
does not in general admit a horizontal lift unless $A \in \Gamma (\cD)$
and, consequently, Formula \eqref{eq.rep_FK2} breaks down.
\end{remark}

%
\subsection{Proof of Proposition \ref{prop.Linfty_est}}

The aim of this subsection is to prove Proposition \ref{prop.Linfty_est} by
carrying out Malliavin calculus (in particular, the integration by parts formula)
for the $\cP$-valued solution $(r(t,u))_{t\in [0,1]}$.
We remark that similar (or more complicated) computations were 
already done in the series of works \cite{inatan, inatan2, ina_LDP}.

We use a Nash embedding $\iota\colon M \hookrightarrow \R^N$
with respect to $\hat{g}$ for sufficiently large $N\in\N$.
(We will often identify $M$ and $\iota (M)$ and just write $M \subset \R^N$.)
If $f\in C^\infty (M)$, then $f$ extends to $\tilde{f}\in C^\infty (\R^N)$
with compact support with the following property at every $x\in M$: 
If $\mathbf{n}_x\in T_x \R^N \cong \R^N$ is normal to $T_xM \subset \R^N$,
then $\mathbf{n}_x \tilde{f}=0$ at $x$.
At every $x\in M$, $T_x \R^N$ admits a natural orthogonal decomposition
into the tangent and normal subspaces.
For $V \in \Gamma (T \R^N)$,  we define a vector field 
$V^{{\rm tan}}$ on $M$  by imposing $(V^{{\rm tan}})_x$ to be the 
tangent component to $T_xM$ of $V_x$.
Moreover, $V^{{\rm tan}}$ is smooth, i.e. $V^{{\rm tan}} \in\Gamma (M)$.
Then, $V  \tilde{f} \equiv (V^{{\rm tan}}) f$ on $M$.

Next, we introduce a beautiful embedding of $\cP = \pfbM$.
Let $\{{\mathbf e}_i\mid 1\le i\le d\}$ be the canonical ONB of $\R^d$.
For $u\in \cP$, we set 
\[
\iota^\prime (u) :=\left( \iota (\pi (u)); \, (\iota_*)_{\pi (u)} (u{\mathbf e}_1 ), \ldots,  (\iota_*)_{\pi (u)} (u{\mathbf e}_d)
\right)
\,\, \in \R^N \times (\R^N)^d =\R^{N(1+d)}.
\]
Since $\iota$ is an isometric embedding, 
$\{(\iota_*)_{\pi (u)} (u{\mathbf e}_i) \}_{i=1}^d$ is orthonormal in $\R^N$.
Then, $\iota^\prime\colon \cP \hookrightarrow \R^{N(1+d)}$ 
is an embedding. Moreover, these embeddings respect the structure 
of projections, that is, $\iota \circ \pi = \pi^\prime \circ \iota^\prime$.
Here,  
$\pi^\prime \colon \R^N \times (\R^N)^d \to \R^N$ 
is the projection that picks up the leftmost component of $\R^N \times (\R^N)^d =\R^{N(1+d)}$.
Thanks to this fact, we need not distinguish $\pi$ and $\pi^\prime$ 
when we identify $\cP$ and $\iota^\prime(\cP)$,
which makes our computations quite simple.
When there is no fear of confusion, we will identify 
$\iota^\prime(\cP) = \cP$ and write $\pi^\prime =\pi$ for simplicity
of notation.
(For the orthonormal  bundle over a Riemannian manifold, 
this kind of embedding is well-known. Our version here is a slight modification.)
In coordinates,  a generic element 
$u \in \R^N \times (\R^N)^d$ is denoted by $u=(u_{kl})_{0\le k \le d, 1\le l\le N}$ and  $\pi^\prime u=(u_{0l})_{1\le l\le N}$.


Any smooth vector field on $\cP$ extends to one on $\R^{N(1+d)}$ 
with compact support, which will be denoted by the same symbol again
(any such extension will do for our purpose).
Then, SDE \eqref{def.sde_OD} can be realized in $\R^{N(1+d)}$:
\begin{equation}\label{def.sde_OD2}
    dr(t, u)=\sqrt{2}\sum_{i=1}^n L_i(r(t, u))\circ dw^i_t
       +L_0(r(t,u))dt,
    \quad r(0, u)=u\in \R^{N(1+d)}.
\end{equation}
When $u \in \iota^\prime(\cP)$, the solution coincides with that of \eqref{def.sde_OD}.
For this reason, we will use the same symbol $(r(t, u))_{t\in [0,1]}$ again
by slightly abusing the notation.
Since $\ell A$ extends to a vector field on the ambient space
with compact support, there is a constant $C>0$ such that
\begin{equation}\label{eq.starbucks}
\sup_{u\in \cP} |(\ell A)_u\,\mathbb{E} \left[ (f\circ \pi ) ( r(t,u)) \right]  |
\le C 
\sup_{k,l}
\sup_{u\in \R^{N(1+d)}} \left| \frac{\partial}{\partial u_{kl}}
\mathbb{E} \left[ (\tilde{f} \circ \pi )( r(t,u)) \right] 
\right|.
\end{equation}
Hence, it suffices to estimate RHS of \eqref{eq.starbucks} above.


Since SDE \eqref{def.sde_OD2} is on a Euclidean space, it has been 
thoroughly studied. We recall some results now.
Denote by $(J (t,u))_{t \in [0,1]}$ the Jacobian process of $(r (t,u))_{t \in [0,1]}$,
namely, $J (t,u)= \nabla r (t,u)$. Here, $\nabla$ stands for 
the standard gradient on $\R^{N(1+d)}$ with respect to the $u$-variable.
Firstly, for all $p \in (1, \infty)$ and $r \in [0,\infty)$, we have
\begin{equation}\label{ineq.SDE1}
\sup_{u\in \R^{N(1+d)}} \sup_{t\in [0,1]} \{
\|r(t,u)\|_{{\bf D}_{p,r}} + \|J (t,u)\|_{{\bf D}_{p,r}} 
\}<\infty.
\end{equation}
Secondly, we also have
\begin{equation}\label{ineq.SDE2}
 \mathbb{E} \left[ \sup_{u\in \R^{N(1+d)}} |J (t,u) |
\right] <\infty, 
\qquad 0\le t \le 1.
\end{equation}
Note that the sup is inside the expectation in \eqref{ineq.SDE2}.

Take a partition of unity $1 \equiv \sum_{m =1}^K \chi_m$ 
such that ${\rm supp} (\chi_m)$ is contained in a coordinate chart
for each $m$.
By \eqref{ineq.SDE2} and Lebesgue's dominated convergence theorem,
one can see that
\begin{align}
\lefteqn{
\partial_{kl} \mathbb{E} \left[ (\tilde{f} \circ \pi )( r(t,u)) \right] 
}
\nn\\
&=
\mathbb{E} \left[\partial_{kl} \bigl\{ (\tilde{f} \circ \pi )( r(t,u)) \bigr\}\right]
\nn\\
&=
\sum_{j=1}^N 
\mathbb{E} \left[ (\partial_{0j} \tilde{f})  (\pi( r(t,u)) )\cdot  
J_{0j, kl} (t,u)\right]
\nn\\
&=
\sum_{j=1}^N \sum_{m=1}^K 
\mathbb{E} \left[ (\partial_{0j}^{{\rm tan}} f)  (\pi( r(t,u))) \cdot  
\chi_m  (\pi( r(t,u)))
\cdot
J_{0j, kl} (t,u)\right],
\label{ineq.SDE3}
\end{align}
where $\partial_{kl}  =\partial/\partial u_{kl}$ and 
$J_{0j, kl} (t,u)$ denotes the matrix component of $J (t,u)$.
To check the second equality above, just recall that 
$\pi( r(t,u)) = \{ r(t,u)_{0j}\}_{1 \le j\le N}$.

Since $\partial_{0j}^{{\rm tan}} \in \Gamma (M)$, 
$J_{0j, kl} (t,u) \in {\bf D}_\infty$ and $\pi( r(t,u)) \in {\bf D}_\infty (M)$ 
is non-degenerate, we can apply Corollary \ref{cor.keyIbP} 
(and Proposition \ref{prop.IbP_mfd}) to 
RHS of \eqref{ineq.SDE3} by setting
\[
F = \pi( r(t,u)),\,\,  G =J_{0j, kl} (t,u), \,\, A = \partial_{0j}^{{\rm tan}}, \,\,
\psi =f, \,\, \chi =\chi_m.
\]
In the explicit expression of the ``Malliavin weight" $\Phi_{A, \chi}$ in 
\eqref{ipb9.eq} and \eqref{ipb11.eq}, all but one factor 
are $L^p$-bounded in $(t, u)$ for every $p \in (1,\infty)$,
thanks to \eqref{ineq.SDE1}.
The only exception is $\gamma_F$, the inverse of Malliavin covariance matrix.
As for $\gamma_F$, it holds that
\[
|\gamma_F| \le c^\prime (\det \sigma_{F})^{-1} |\sigma_{F}|^{d-1}
\le  c^\prime  (\det \sigma_{F})^{-1} \|F\|_{{\bf D}_{p,1}}^{2(d-1)},
\]
where the constant $c^\prime >0$ does not depend on $F$.
\footnote{
The inverse of a matrix $Z$ equals $(\det Z)^{-1}$ times
the adjunct (adugate) matrix $\mathrm{adj} (Z)$ of $Z$.
}
Thanks to the Kusuoka-Stroock estimate \eqref{ineq.KS_est}, which is uniform in $(t,u)$, we have 
\[ 
\| \gamma_{\pi( r(t,u))} \|_{L^2} \le \frac{c_1}{t^{\nu_1}},
 \qquad
t\in (0,1], \, u \in \cP.
\]
Here, $c_1$ and $\nu_1$ are certain positive constants 
independent of $(t, u)$.
Combining all arguments above, we can see that
RHS of \eqref{ineq.SDE3} is dominated by 
$c_2/t^{\nu_2}$ for all $t\in (0,1]$ and $u \in \cP$,
where $c_2$ and $\nu_2$ are certain positive constants 
independent of $(t, u)$.
This completes the proof of  Proposition \ref{prop.Linfty_est}.

\bigskip
\noindent
{\bf Acknowledgement:}~
The author is grateful to Professor Shouhei Honda for helpful comments.
The author is supported by 
JSPS KAKENHI Grant No. 26K06846.


\bigskip
\begin{flushleft}
  \begin{tabular}{ll}
    Yuzuru \textsc{Inahama}
    \\
    Faculty of Mathematics,
    \\
    Kyushu University,
    \\
    744 Motooka, Nishi-ku, Fukuoka, 819-0395, JAPAN.
    \\
    Email: {\tt inahama@math.kyushu-u.ac.jp}
  \end{tabular}
\end{flushleft}

\end{document}